\newcommand{\frace}[2]{{#1}/{(#2)}}
\renewcommand{\emptyset}{\varnothing}
\renewcommand{\pi}{\uppi}
\newcommand{\binom}[2]{{{#1}\choose{#2}}}
\newtheorem{prop}[defn]{Proposition}
\newtheorem{cor}[defn]{Corollary}
\newtheorem{lem}[defn]{Lemma}
\newtheorem{them}[defn]{Theorem}
\newcommand{\wP}{\mathbb{P}}
\newcommand{\wN}{\mathbb{N}}
\newcommand{\kF}{\mathcal{F}}
\newcommand{\kB}{\mathcal{J}}
\newcommand{\kD}{\mathcal{D}}
\newcommand{\kG}{\mathcal{G}}
\newcommand{\kS}{\mathcal{S}}
\newcommand{\kL}{\mathcal{L}}
\newcommand{\kOS}{\mathcal{OS}}
\newcommand{\kOL}{\mathcal{OL}}
\newcommand{\stern}{\ast}
\newcommand{\N}{\mathbb{N}}
\newcommand{\R}{\mathbb{R}}
\renewcommand{\P}{\mathbb{P}}
\newcommand{\A}{\mathcal{A}}
\newcommand{\E}{\mathbb{E}}
\newcommand{\B}{\mathcal{J}}
\newcommand{\F}{\mathcal{F}}
\newcommand{\K}{\mathcal{K}}
\begin{document}
\begin{frontmatter}

\title{A new class of large claim size distributions: Definition,
properties, and ruin theory}
\runtitle{A new class of large claim size distributions}

\begin{aug}
%%%% inicialai - be tarpu
\author[A]{\inits{S.}\fnms{Sergej}~\snm{Beck}\thanksref{e1}\ead[label=e1,mark]{sergej.beck@hotmail.de}},
\author[A]{\inits{J.}\fnms{Jochen}~\snm{Blath}\corref{}\thanksref{e2}\ead[label=e2,mark]{blath@math.tu-berlin.de}} \and
\author[A]{\inits{M.}\fnms{Michael}~\snm{Scheutzow}\thanksref{e3}\ead[label=e3,mark]{ms@math.tu-berlin.de}}
%%\runauthor{} %% auto
%\dedicated{}
\address[A]{Institut f\"ur Mathematik, Technische Universit\"at
Berlin, Stra{\ss}e des 17. Juni 136, 10623 Berlin, Germany. \printead{e1,e2,e3}}
\end{aug}

% HISTORY:
\received{\smonth{7} \syear{2013}}
\revised{\smonth{2} \syear{2014}}

% ABSTRACT
%
\begin{abstract}
We investigate a new natural class $\mathcal{J}$ of probability
distributions modeling large claim sizes, motivated by the `principle
of one big jump'.
Though significantly more general than the (sub-)class of
subexponential distributions $\mathcal{S}$, many important and
desirable structural properties can still be derived.
We establish relations to many other important large claim distribution
classes (such as $\mathcal{D}$, $\mathcal{S}$, $\mathcal{L}$,
$\mathcal
{K}$, $\mathcal{OS}$ and $\mathcal{OL}$),
discuss the stability of $\mathcal{J}$ under tail-equivalence,
convolution, convolution roots, random sums and mixture, and then apply
these results to derive a partial analogue of the famous
Pakes--Veraverbeke--Embrechts theorem from ruin theory for $\mathcal{J}$.
Finally, we discuss the (weak) tail-equivalence of infinitely-divisible
distributions in $\mathcal{J}$ with their L\'evy measure.
\end{abstract}

% KEYWORDS
% visi is mazosios raides ir pagal abecele
%
\begin{keyword}
\kwd{heavy-tailed}
\kwd{random walks}
\kwd{ruin theory}
\kwd{subexponential}
\end{keyword}
\end{frontmatter}

%s1 #&#
\section{Introduction}
\label{sn:intro}

%\subsection{Motivation and definition of class $\kB$}
%\label{ssn:motivation}

%\textbf{Motivation and outline of the paper.}
Large claim size distributions play an important role in many areas of
probability theory and related fields, in particular insurance and
finance. They often describe `extreme events'
and are typically `heavy-tailed' (see, e.g., \cite{Embrechts1997} for
an overview). However, the class of heavy-tailed random variables $\K$
(defined in Section~\ref{ssn:relation} below) has a very
rich structure, and the identification and discussion of relevant
sub-classes is still an area of active research (see, e.g.,
\cite{Foss2011} for a recent account).
While this makes it difficult to formulate general statements for $\K$,
for example regarding ruin probabilities, such results can be achieved
for certain important subclasses,
most importantly the \emph{subexponential distributions} $\kS$. Recall
that the distribution $F$ of i.i.d. nonnegative random variables $X_1,
X_2, \ldots$ is called
subexponential, iff
%
%e1.1 #&#
%
\begin{equation}
\label{eq:subexp} \lim_{x\to\infty}\frac{\P(\max(X_{1},\ldots,X_{n})>x)}{\P
(X_{1}+\cdots
+X_{n}>x)}=1
\end{equation}
for every $n \ge2$. This means that the tail of the distribution of
the maximum of $n$
such random variables is asymptotically equivalent to the tail of the
distribution of
their sum. Hence, this sum is typically dominated by its largest
element in the case of
an extreme event.

The class $\kS$ of subexponential distributions has several important
stability properties, and in particular allows an elegant
characterization of the asymptotic behaviour of the
ruin probability in the Cram\'er--Lundberg model (and in a weaker form
also for more general renewal models). Indeed, the corresponding ruin
function $\Psi$ is asymptotically equivalent,
for large initial capital, to the so-called tail-integrated
distribution $F_I$ associated with $F$ (suitably normalized), iff
$F_I\in\kS$ (e.g., \cite{Veraverbeke82}, see also Theorem~\ref
{thm:P-V-E} below).

From an intuitive point of view, one might ask whether condition (\ref
{eq:subexp}) on the tail behaviour of the $X_1, X_2, \ldots$ might be
too restrictive and could be weakened. For example, one could require
that the maximum is sufficiently close to, but not quite at the same
level as, the sum of the claim sizes $x$, say greater than $x-K$ for
some constant $K$. This appears to be a natural definition of the
folklore `principle of one big jump' and leads to the following definition.

%
%de1 #&#
%
\begin{defn}[(Distributions of class $\kB$)]
Let $X_1, X_2, \ldots$ be a sequence of i.i.d. nonnegative
(essentially) unbounded random variables. Let $F$ denote their common
distribution function and let $\F$ denote the set of all distribution
functions of nonnegative random variables with unbounded support. We
define the class $\kB\subset\F$ as the set of all distribution
functions $F\in\F$, such that
for all $n \ge2$,
%
%e1.2 #&#
%
\begin{equation}
\label{eq:classB} \lim_{K \to\infty} \liminf_{x \to\infty}
\frac{\P(\max(X_1,
\ldots,
X_n)>x-K,X_1 + \cdots+ X_n > x)}{\wP(X_1 + \cdots+ X_n > x)}=1.
\end{equation}
\end{defn}

Just as in the case of subexponential distributions, it is enough that
(\ref{eq:classB}) holds for $n=2$
(see Proposition~\ref{lem:basic-properties}). We will provide several
equivalent
formulations of (\ref{eq:classB}) below.
Of course, such a definition raises immediately a variety of questions.
First, one certainly needs to clarify whether this definition produces
a nontrivial new class of distributions at all. We will answer this
affirmatively in Section~\ref{ssn:relation} where we will
also discuss the relation of $\B$ to other distribution classes (it is
obvious from the definition that $\kS\subset\B$). At first sight
rather surprisingly, it turns out that our definition also admits some
light-tailed distributions to $\B$, see Example~\ref{ex:light} (which,
as an element of $\mathcal{S}(\gamma)$ with $\gamma=1$, is also known
to obey a `principle of one big jump').
Given this last fact, a second natural question is whether $\B$ is
still sufficiently coherent to
exhibit convenient closure properties. It turns out that $\B$ is closed
under weak tail-equivalence (in contrast to $\kS$), see Proposition~\ref{TailClosure-Lemma} below,
and has good properties with respect to closure under convolution and,
importantly, convolution roots (Proposition~\ref{Convolution-Lemma}),
as well as mixture (Proposition~\ref{lem:mix}). The same holds true for
random sums (Propositions~\ref{lem:Zufaellige-Sum-B} and~\ref{lem:Zufaellige-Sum-B-2}).

These rather remarkable properties will then be applied in
Section~\ref{sec:appl}, where we provide a partial analogue of the
Pakes--Veraverbeke--Embrechts theorem for $\B$
(Theorem~\ref{thm:P-V-E-for-B}), establishing weak tail-equivalence
among classical
risk quantites from ruin theory. This result is new and appears quite
striking, given that the class $\mathcal{J}$ is far richer than
$\mathcal{S}$.
%The letter $\B$ has been suggested to us by Sergey Foss to indicate
%its close link to the `principle of one big jump'
%{\color{blue} yet allowing an interpret
%However, since $\B$ contains light-tailed distributions which admit
%the existence of a classical Lundberg-coefficient (see, e.g., Example
%\ref{ex:light}), the new class does not
%tell what the most likely path to ruin looks like - both the ``one big
%jump'' (subexponential distributions) as well as the ``incremental
%ruin'' (existence of Lundberg-coefficient) scenarios are included.

Finally, for infinitely-divisible elements of $\B$, we prove their weak
tail-equivalence with
their normalized L\'evy measure, in the spirit of earlier results of
Goldie \textit{et al.} \cite{Embrechts1979} and Shimura and Watanabe
\cite{Shimura2005}.

%
%re2 #&#
%
\begin{rem}\label{rem:classA}
Regarding the rationale behind (\ref{eq:classB}) one might wonder
whether one should also consider distribution functions $F$ with the
property that
%the corresponding i.i.d. random variables satisfy, for all $n \ge2$,
%
%e1.3 #&#
%
\begin{equation}
\label{eq:classA} \lim_{x \to\infty} \frac{\P(\max(X_1, \ldots,
X_n)>(1-\varepsilon
)x,X_1+ \cdots+X_{n}>x)}{\P(X_1 + \cdots+ X_n > x)}=1,
\end{equation}
for all $n \ge2$, and
for all $\varepsilon\in(0,1)$. Indeed, this natural condition gives
rise to an even larger class of distributions, denoted by $\A$, with
$\kS\subset\B\subset\A$.
%It is yet unclear whether this class has comparable structural
%properties, and it will be further investigated
Some results for the class $\A$ can be found in the dissertation of
Beck \cite{Beck2014}.
%Some preliminary results can be found e.g. in \cite{Koscielny2008} and
%\cite{Strauss2007}.
\end{rem}

%\textbf{Notation.} Throughout the paper we let $X_1, X_2, \ldots$ be
%a sequence of i.i.d. random variables on some probability space $(
%\Omega, \A, \P)$ with values in $[0,\infty)$. Let $F$ with $F(0)=0$
%denote their common distribution function with unbounded support.
%We denote by $S_n$ the sum of the first $n$ random variables. that is
%$$
%S_{n}:= \sum_{i=1}^{n}
%%\overset{n}{\underset{i=1}{\Sigma}}
%X_{i}.
%$$
%Further, $\overline{F}(x)=1-F(x)$ denotes the
%\emph{tail} of the distribution function $F$. We denote by
%$x_{k,n}$ the $k$-th largest element of reals $x_{1},\ldots,x_{n}$.
%Let $F\ast G$ be the convolution
%of two distribution functions $F(x)$ and $G(x)$ and $F^{n\ast}$
%the $n$-fold convolution of $F(x)$ with itself for $n\geq0$, where
%$F^{1\ast}=F$ and $F^{0\ast}$
%is the distribution corresponding to the Dirac measure at 0.
%
%Let $\mathcal{G}$ denote the set of nonnegative, unbounded and
%nondecreasing
%functions. As above, $\mathcal{F}$ denotes the set of all distribution
%functions
%of nonnegative random variables.
%
%Finally, let $f(x)$ and $g(x)$ be two positive functions on $[0,
%\infty)$.
%We write that $f(x)\sim g(x)$ iff
%$$
%\mathrm{lim}_{x\rightarrow\infty} \frac{f(x)}{g(x)}=1,
%$$
%that is, $f$ and $g$ are \emph{asymptotically equivalent} (as $x \to
%\infty$), and write
%and the relation $f(x)\asymp g(x)$ in the case
%$$
%0<\mathrm{lim} \mathrm{\mathrm{inf}}_{x\rightarrow\infty}
%\frac{f(x)}{g(x)}
%\le\mathrm{lim} \mathrm{\mathrm{sup}}_{x\rightarrow\infty}
%\frac{f(x)}{g(x)} < \infty.
%$$
%sometimes called \emph{weak asymptotic equivalence}. {\tt check!}

%s2 #&#
\section{Basic properties of the class $\kB$}
\label{sn:relation}

%s2.1 #&#
\subsection{Notation and set-up}
\label{ssn:notation}

Throughout Section~\ref{sn:relation}, we let $X_1, X_2, \ldots$ be a
sequence of i.i.d. random variables on some probability space $(\Omega,
\A, \P)$ with values in $[0,\infty)$. Let $F \in\mathcal{F}$ denote
their common distribution function.
% which we assume to be of unbounded support. Let $\mathcal{F}$ be the
%set of all such distributions.
We denote by $S_n$ the sum of the first $n$ random variables, that is
\[
S_{n}= \sum_{i=1}^{n}
%\overset{n}{\underset{i=1}{\Sigma}}
X_{i}. %
\]
Further, let $\overline{F}(x):=1-F(x)$ be the
\emph{tail} of $F$. If $\nu$ is a probability measure on $[0,\infty)$,
then we define $\overline\nu(t):=\nu((t,\infty))$.
Let $F\ast G$ be the convolution
of two distribution functions $F, G \in\F$ and $F^{n\ast}$, for $n
\ge0$, the $n$-fold convolution of $F$ with itself, where
$F^{1\ast}:=F$ and $F^{0\ast}$
is the distribution corresponding to the Dirac measure at 0.
Let $f$ and $g$ be two positive functions on $[0,\infty)$.
We write that $f\sim g$ if
\[
\lim_{x\rightarrow\infty} \frac{f(x)}{g(x)}=1, %
\]
that is, $f$ and $g$ are \emph{(strongly) asymptotically equivalent}
(as $x \to\infty$), and $f\asymp g$ in the case
\[
0<\liminf_{x\rightarrow\infty} \frac{f(x)}{g(x)} \le\limsup
_{x\rightarrow\infty} \frac{f(x)}{g(x)} < \infty. %
\]
The latter relation will be called \emph{weak asymptotic equivalence}.
Whenever $\bar\F\subseteq\F$, we freely write $X \in\bar\F$ or
$\mu
\in\bar\F$
for a nonnegative random variable $X$ or a probability measure $\mu$ on
$[0,\infty)$ when the associated distribution function belongs to
$\bar
\F$.
Let $\mathcal{G}$ denote the set of nonnegative, unbounded and
nondecreasing functions. Finally, for all $n \in\N$ and for all $k
\in
\{1, \ldots, n\}$, $x_{k,n}$ denotes
the $k$th largest among $x_1, \ldots, x_n$.
%we denote by
%$x_{k,n}$ the $k$-th largest element of a
%{family of reals $(x_i)_{i=1, \ldots, n}$}
%%set of reals $\{x_{1},\ldots,x_{n}\}$
%for some $n \in\N$.

%s2.2 #&#
\subsection{Equivalent characterizations of the class $\mathcal{J}$}
\label{ssn:charact}

It is interesting to see that the defining relation (\ref{eq:classB})
is only one of many ways to characterize the class $\kB$. Define, for
$n \ge2$,
%
%e2.1 #&#
%
\begin{eqnarray}
\label{defnBwithG} \mathcal{J}_{1}^{(n)} & := & \Bigl\{ F\in
\mathcal{F}:\lim_{K\rightarrow
\infty}\liminf_{x\rightarrow\infty}
\wP(X_{2,n}\leq K|S_{n}>x)=1 \Bigr\} ,
\nonumber
\\
\mathcal{J}_{2}^{(n)} & := & \Bigl\{ F\in\mathcal{F}:\lim
_{K\rightarrow
\infty}\inf_{x\geq0} \wP(X_{2,n}\leq
K|S_{n}>x)=1 \Bigr\} ,
\nonumber
\\
\mathcal{J}_{3}^{(n)} & := & \Bigl\{ F\in\mathcal{F}:\lim
_{K\rightarrow
\infty}\liminf_{x\rightarrow\infty}\wP (X_{1,n}>x-K|S_{n}>x)=1
\Bigr\} ,
\\
\mathcal{J}_{4}^{(n)} & := & \Bigl\{ F\in\mathcal{F}:\lim
_{K\rightarrow
\infty}\liminf_{x\rightarrow\infty}\wP (X_{1,n}>S_{n}-K|S_{n}>x)=1
\Bigr\},
\nonumber
\\
\mathcal{J}^{(n)} & := & \Bigl\{ F\in\mathcal{F}:\lim
_{x\rightarrow
\infty}\wP\bigl(X_{2,n}>g(x) |S_{n}>x\bigr)=0\
\forall g\in\mathcal {G} \Bigr\}.
\nonumber
\end{eqnarray}
Note that, by definition, $\mathcal{J}= \bigcap_{n\ge2} \mathcal
{J}_3^{(n)}$.
However, it turns out that all of the above subclasses are equal to
%these sets agree with
the class $\B$. Indeed, we have
%[Alternative characterizations of class $\B$]

%pr3 #&#
%
\begin{prop}
\label{lem:basic-properties}
%\label{lem:B_n}
For all $n \ge2$,
\begin{enumerate}[(b)]
\item[(a)]$\kB^{(n)}=\kB_{1}^{(n)}=\kB_{2}^{(n)}=\kB_{3}^{(n)}=\kB
_{4}^{(n)}$,

\item[(b)]$\kB^{(n)}=\kB^{(n+1)}$,

\item[(c)]$\B=\B^{(2)}$.
\end{enumerate}
\end{prop}

A proof can be found in Section~\ref{sn:proofs}. Note that a term
reminiscent to the one in the definition of class
$\B^{(2)}$ appears implicitly in \cite{Asmussen2003}, Proposition~2.

%re4 #&#
%
\begin{rem}
An elegant probabilistic way to think about the condition giving rise
to class $\B^{(n)}_2$ is to interpret it as tightness condition of the
conditional laws of $X_{2,n}$, given $S_n>x$.
\end{rem}

%s2.3 #&#
\subsection{Relation to other classes of claim size distributions and
heavy tails}
\label{ssn:relation}

Recall that a claim size distribution $F\in\F$ is called \emph{heavy-tailed},
if it has no exponential moments, that is,
\[
\int_{0}^{\infty}\mathrm{e}^{\lambda x}\,
\mathrm{d}F(x)=\infty \qquad\mbox{for all }\lambda>0.
\]
In this case, we write $F\in\mathcal{K}$. Following
the definition (but not the notation) of \cite{Yang2005}, we write
$F\in
\mathcal{K}^*$ if $\lim_{x\rightarrow\infty}\mathrm{e}^{\lambda
x}\overline{F}(x)=\infty$
holds for all $\lambda>0$. Note that $\mathcal{K}^*\subsetneq
\mathcal{K}$,
see, for example, \cite{Su2009}, and thus we call elements of
$\mathcal{K}^*$
`strongly heavy tailed'.

Three of the most important and well-studied subclasses of heavy-tailed
distributions are the class $\kS$ of \emph{subexponential}
distributions, the class of \emph{long-tailed} distributions $\kL$ and
the class $\kD$ of \emph{dominatedly varying} distributions. Recall
%%e.g. from \cite{Embrechts1997}
that a distribution
$F \in\F$ is subexponential if for all $n \ge2$,\vspace*{1pt}
\[
\lim_{x\rightarrow\infty}\frac{\overline{F^{n\stern
}}(x)}{\overline{F}(x)}=n %
\]
(it is actually enough to require this condition for $n=2$ only, see,
e.g., \cite{Embrechts1997}), and that $F$ is long-tailed if\vspace*{1pt}
\[
\lim_{x\rightarrow\infty} \frac{\overline{F}(x+y)}{\overline{F}(x)}=1 %
\]
for every $y\in\mathbb{R}{\setminus}\{0\}$ (or equivalently for
some). Further, $F$ has a dominatedly varying tail, if
\[
\limsup_{x\rightarrow\infty} \frac{\overline{F}(xu)}{\overline
{F}(x)}<\infty %
\]
for all (or equivalently for some) $0<u<1$.
It is well known that
\[
\kS\subset\kL\subset\mathcal{K}^*\subset\mathcal{K},\qquad\kD \subset
\mathcal{K}^*\subset \mathcal{K},\qquad\kL\cap\kD\subset\kS\quad\mbox{and}\quad\kD
\nsubseteq\kS,\kS \nsubseteq\kD,
\]
see
\cite{Embrechts1997}
for most of these inclusions (the remaining ones are easy to check).

A generalization of the class of subexponential distributions is given
by Shimura and Watanabe \cite{Shimura2005}. They systematically
investigate the class $\kOS$ of `$O$-\emph{subexponential}'
distributions, which was introduced by Kl\"uppelberg in \cite
{Klueppelberg1990}, where $F \in\kOS$ if
%
%e2.2 #&#
%
\begin{equation}
\label{eq:cf} c_{F}:=\limsup_{x\rightarrow\infty}
\frac{\overline{F^{2\stern
}}(x)}{\overline{F}(x)}<\infty.
\end{equation}
In a similar way, it is possible to generalize the class $\kL$. Let
$\kOL$ be the class of all distributions such that
\[
\limsup_{x\rightarrow\infty} \frac{\overline{F}(x+y)}{\overline
{F}(x)}<\infty %
\]
for every $y\in\R$. The generalizations $\kOL$ and $\kOS$ of the
classes $\mathcal{L}$
and $\mathcal{S}$ contain some light-tailed distributions, so that
$\kOL
, \kOS\nsubseteq\mathcal K$. Further, it can be shown \cite
{Shimura2005}, Proposition~2.1, that
\[
\kOS\subset\kOL. %
\]
Finally, we recall the light-tailed distribution classes $\mathcal{S}
(\gamma)$ and
$\mathcal{L} (\gamma)$, for $\gamma\ge0$: We say that a distribution
$F \in\F$ belongs to $\mathcal{S} (\gamma)$ for some $\gamma\ge0$,
if for any $y \in\R$,
%
%e2.3 #&#
%
\begin{equation}
\label{eq:defSgamma} \lim_{x \to\infty} \frac{\overline{F}(x+y)}{\overline{F}(x)} = \exp (-\gamma
y),
\end{equation}
and for some constant $c \in(0, \infty)$,
\[
\frac{\overline{F^{2*}}(x)}{\overline{F}(x)} = 2c < \infty. %
\]
A distribution $F \in\F$ belongs to $\mathcal{L} (\gamma)$, iff it
satisfies (\ref{eq:defSgamma}).
These classes were introduced independently by
Chistyakov \cite{Chistyakov1964} and Chover, Ney
and Wainger \cite{Chover1973a,Chover1973b}, see also \cite
{Embrechts1982}. Note that $\mathcal{L} (0)=\mathcal{L}$ and
$\mathcal
{S} (0)=\mathcal{S}$.

For our new class $\B$, we have the following results.

%[Relations between known classes of distributions and $\kB$]

%pr5 #&#
%
\begin{prop}
\label{lem:Class_B}
\begin{enumerate}[(b$'$)]
\item[(a)]$\mathcal{J}\subset\mathcal{OS}$,

\item[(b)]$\kB\cap\kL=\kS$,

\item[(b$'$)]$\mathcal{J} \cap\mathcal{L} (\gamma) = \mathcal{S}
(\gamma),
\gamma>0$,

\item[(c)]$\mathcal{D}\subset\kB$,

\item[(d)]$\B\nsubseteq\K$.
\end{enumerate}
\end{prop}

%{\tt Prove that $\kL\nsubseteq\B$ (and thus $\B\nsubseteq\kL$).
%Further, mention, for completeness, that
%$\mathcal{D}\subsetneq\kB$ and that $\B\subsetneq\kOS$?}
A proof can be found in Section~\ref{sn:proofs}.
%{
Part (b$'$) has been suggested to us by Sergey Foss. Note
that (b$'$) already implies (d) (so that the latter is in principle redundant),
but we think that the fact that $\B$ includes some light-tailed functions
is important and thus we end this subsection with a concrete example
(still obeying a `principle of one big jump').
%}

%
%ex6 #&#
%
\begin{example}\label{ex:light} Consider the distribution function
$F\in\kF$ with density
\[
f(x)=\mathrm{e}^{-x}\frac{C}{1+x^{2}},\qquad x\ge0,
\]
for $C>0$ such that $\int_{0}^{\infty}f(x)\,\mathrm{d}x=1$. Note
that there
seems to be no closed-form expression for $C$, but it can be evaluated
numerically to $C\approx1.609$. Obviously $F\notin\K$ and thus
$F\notin
\mathcal{S}$.
Since
\[
\lim_{K\to\infty}\lim_{x\to\infty}\frac{\int_{K}^{x-K}f(y)f(x-y)
\,\mathrm{d}y}{\int_{0}^{x}f(y)f(x-y) \,\mathrm{d}y} =0,
\]
it follows that
\[
\lim_{K\to\infty}\limsup_{z\to\infty}\frac{\int_{z}^{\infty
}\int_{K}^{x-K}f(y)f(x-y) \,\mathrm{d}y \,\mathrm{d}x}{\int_{z}^{\infty
}\int_{0}^{x}f(y)f(x-y) \,\mathrm{d}y
\,\mathrm{d}x}=
\lim_{K\to\infty}\limsup_{z\to\infty}\frac{\wP
(X_{2,2}>K,S_{2}>z)}{\wP(S_{2}>z)}=0
\]
and hence $F\in\kB$. From Proposition~\ref{lem:Class_B}(a), we
also have that $F\in\mathcal{OS}$. Indeed, we can compute
$c_{F}$ from (\ref{eq:cf}) and obtain
%
%e2.4 #&#
%
\begin{equation}
\label{eq:C_F_pi} c_{F}=C\pi.
\end{equation}
Note that
%{
$f(x)$ is obtained from the (subexponential)
density $2/(\pi(1+x^{2}))$ by multiplication with a negative exponential
and a suitable constant. This is a typical way to construct distributions
of the distribution class $\mathcal{S}(\gamma)$, $\gamma\ge0$,
and indeed we have $F\in\mathcal{S}(\gamma)$ with index $\gamma=1$.
This class consists of light-tailed functions and has a well-studied
ruin theory, obeys the `principle of one big jump', and is outside the classical
Lundberg framework.
%}
% (note however that the classical Lundberg results do not
%directly yield the precise asymptotics), so that we refrain from going
%into further detail here.} %, i.e. the following assertions hold for
%all $t$
%real:{
%\begin{eqnarray*}
%\overline{F}(x-t) & \sim& \mathrm{e}^{t}\overline{F}(x),\\
%c_{F} & = & 2\underset{0}{\overset{\infty}{\int}}\mathrm{e}^{x}dF(x)<
%\infty.
%\end{eqnarray*}}
%\hfill{}$\diamond$

%\texttt{Further, it is not hard to see that $\mathcal{S}(1)\subset
%\mathcal{B}$
%(??)} }
\end{example}

%
%re7 #&#
%
\begin{rem}
It seems natural to ask `how many' or `which kind of' light-tailed
functions can be found in $\B$. As a first result in this direction
note that
since $\B\subset\mathcal{OL}$, it follows from Proposition~2.2 in
\cite{Shimura2005} that each light-tailed distribution $F \in\B$
exhibits at least some infinite exponential moments, that is, there
exists a $\lambda_F>0$ such that
\[
\int_0^\infty\mathrm{e}^{\lambda_F x} \,
\mathrm{d}F(x) = \infty. %
\]
Hence, the class $\B$ in some sense `touches the boundary' of the
class of light-tailed functions.
In view of (b$'$), the conjecture $\B= (\B\cap\K) \cup(\bigcup_{\gamma>
0} S(\gamma))$ seems attractive.
\end{rem}

%s2.4 #&#
\subsection{Closure properties}
\label{ssn:closure}

As a first result, we show that our new class $\B$ is closed under weak
asymptotic tail-equivalence (in contrast to $\kS$ and $\kL$, which
require (strong) asymptotic tail-equivalence for closure).

%
%pr8 #&#
%
\begin{prop}\label{TailClosure-Lemma}
If $F\in\mathcal{J}$ and $\overline{F}\asymp\overline{G}$, then
$G\in\mathcal{J}$.
\end{prop}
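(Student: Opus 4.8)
The plan is to work with the characterization $\mathcal J=\mathcal J^{(2)}=\mathcal J_1^{(2)}$ from Proposition~\ref{lem:basic-properties}, since the weak tail-equivalence $\overline F\asymp\overline G$ is exactly the kind of hypothesis that meshes with the $\liminf$-based conditions rather than with the original ratio in \eqref{eq:classB}. Concretely, writing $S_2^F, S_2^G$ for the sums of two i.i.d.\ copies under $F$ resp.\ $G$, I would try to show that $\overline F\asymp\overline G$ forces $\overline{F^{2\stern}}\asymp\overline{G^{2\stern}}$ and, more importantly, that the ``bad event'' probabilities $\wP(X_{2,2}>K,\,S_2>x)$ are comparable (up to constants) when computed under $F$ versus under $G$. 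Then the condition $\lim_{K\to\infty}\liminf_{x\to\infty}\wP(X_{2,2}\le K\mid S_2>x)=1$, which by Proposition~\ref{lem:Class_B}a) can be rephrased (using $c_F<\infty$) as $\lim_{K\to\infty}\limsup_{x\to\infty}\wP(X_{2,2}>K,\,S_2>x)/\overline F(x)=0$, transfers from $F$ to $G$.

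The key steps, in order, would be: First, record that $\overline F\asymp\overline G$ means there are constants $0<c\le C<\infty$ with $c\,\overline G(x)\le\overline F(x)\le C\,\overline G(x)$ for all large $x$; since both tails are bounded by $1$ and bounded below away from $0$ on compacts, one can arrange $c\,\overline G(x)\le\overline F(x)\le C\,\overline G(x)$ for \emph{all} $x\ge 0$ after adjusting constants. Second, estimate $\wP(X_{2,2}>K,\,S_2^G>x)$ from above by $\wP(X_{2,2}^G>K)\cdot(\text{something})$ — more carefully, split according to which coordinate is the larger: $\wP(X_{2,2}>K, S_2>x)\le 2\,\wP(X_1>K,\,X_2>K,\,X_1+X_2>x)$, and bound this double-tail integral $\int_{(K,\infty)}\overline G((x-y)\vee K)\,dG(y)$ above by $c^{-2}\int_{(K,\infty)}\overline F((x-y)\vee K)\,dF(y)$ using the pointwise tail comparison twice (once for the integrand, once for the integrator, via integration by parts or a direct measure-domination argument). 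Third, combine with $\overline{F}(x)\le C\,\overline G(x)$, equivalently $\overline G(x)\ge C^{-1}\overline F(x)$, to get
\[
\frac{\wP(X_{2,2}>K,\,S_2^G>x)}{\overline G(x)}\ \le\ \text{const}\cdot\frac{\wP(X_{2,2}>K,\,S_2^F>x)}{\overline F(x)},
\]
and take $\limsup_{x\to\infty}$ then $\lim_{K\to\infty}$, using $F\in\mathcal J$ to kill the right-hand side; this gives $G\in\mathcal J_1^{(2)}=\mathcal J$. A small subtlety: I should check $G\in\mathcal F$, i.e.\ $G$ has unbounded support — but $\overline G\asymp\overline F>0$ for all $x$ forces this immediately.

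The main obstacle I anticipate is the second step: controlling the double-tail convolution-type integral $\int_{(K,\infty)}\overline{G}((x-y)\vee K)\,dG(y)$ by the corresponding $F$-integral. Pointwise domination of $\overline G$ by $\overline F$ handles the integrand, but one also needs to replace the measure $dG$ by $dF$, and a density/measure bound does \emph{not} follow from a tail bound in general. The clean fix is to integrate by parts so that both the integrand and the ``integrator'' appear as tails: write the double-tail probability as $\int \overline G(y)\,d_x[\cdots]$ or symmetrize, e.g.\ $\wP(X_1>K,X_2>K,X_1+X_2>x) = -\int_{K}^{\infty}\overline G\big((x-y)\vee K\big)\,d\overline G(y)$ and then use Fubini / a change of the order of ``which variable is integrated'' to land on an expression in which only $\overline G$-values (never $dG$ against a fixed kernel) enter, at which point the tail comparison $c\,\overline G\le\overline F\le C\,\overline G$ applies termwise. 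I expect this to go through with constants depending only on $c,C$, but it is the only place where genuine care (as opposed to routine estimation) is needed.
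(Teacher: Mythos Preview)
Your approach is correct and is essentially the same as the paper's, just packaged differently. The paper argues by contradiction via the characterization $\mathcal{J}^{(2)}$ (with a function $g\in\mathcal{G}$), whereas you argue directly via $\mathcal{J}_1^{(2)}$ (with a fixed level $K$); both are equivalent here. The paper also replaces $\overline{F^{2*}}$ by $\overline{G^{2*}}$ using Lemma~\ref{Hilfslemma1}, while you replace $\overline{F^{2*}}$ by $\overline{F}$ using $F\in\mathcal{OS}$ (Proposition~\ref{lem:Class_B}a)); again, either works.

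The one place where your write-up diverges from the paper is the step you flagged as the ``main obstacle'': bounding $\wP(Y_{2,2}>K,\,\hat S_2>x)$ by a constant times $\wP(X_{2,2}>K,\,S_2>x)$ from the pointwise tail comparison alone. The paper does \emph{not} re-derive this; it simply invokes Lemma~\ref{Hilfslemma2} (i.e.\ \cite[Lemma~2.36]{Foss2011}), which states precisely
\[
\limsup_{x\to\infty}\frac{\wP(\hat S_2>x,\,Y_1\wedge Y_2>g(x))}{\wP(S_2>x,\,X_1\wedge X_2>g(x))}\ \le\ \Big(\limsup_{x\to\infty}\frac{\overline G(x)}{\overline F(x)}\Big)^{2}.
\]
Your integration-by-parts plan is exactly how one proves such a lemma (after the symmetrization $\psi=2\int_K^{x/2}\overline G(x-y)\,dG(y)+\overline G(x/2)^2$ the measure $dG$ can be traded for $\overline G$, then bounded by $c^{-1}\overline F$, then traded back to $dF$); so there is no gap, only a choice between citing the result and re-proving it.
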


%{\tt Do we have strong resp. proportional (in the sense of Foss et al)
%tail equivalence?\\
%Mention that $\kS$ has strong resp. proportional (in the sense of Foss
%et al) tail equivalence?

%
%ex9 #&#
%
\begin{example}
Neither $\mathcal{L}$ nor $\mathcal{S}$ are closed under weak
tail-equivalence.
Indeed, let
\[
F(x):= \biggl(1-\frac{1}{x} \biggr)^{+},\qquad x \ge0,
\]
be a Pareto distribution with index $1$, so that $F$ is subexponential
and long-tailed. Let $G$ be the `Peter-and-Paul' distribution, that is,
% if $X$ is distributed according to $G$, we have
%$\wP\{X=2^{k}\}=2^{-k}$ and
$\overline{G}(x)=2^{-k}$
for $x\in[2^{k},2^{k+1})$, $k \in\N_0$. Then $F$ and $G$ are weakly
tail-equivalent,
that is, $\overline{F}\asymp\overline{G}$, but $G\notin\mathcal{L}$
and hence $G\notin\mathcal{S}$.
%\hfill$\diamond$
\end{example}

Although we will see that $\B$ is not closed under convolution,
%{(\tt does this hold for $\kOS$? - yes, \cite[Page 452, Proposition
%2.5]{Shimura2005})},
we will find below that we have closure for `convolution powers' and
for weakly tail-equivalent distributions. Further, we have closure for
`convolution roots', in contrast to $\kOS$ (cf. \cite{Shimura2005}) --
this property is highly desirable as we will see in the sequel.

We say that a distribution class $\mathcal{C}$ is \emph{closed under
convolution},
if $F_{1}*F_{2}\in\mathcal{C}$ for any $F_{1}, F_{2} \in\mathcal{C}$.
It is well known that the class $\kL$ is
closed under convolution, see \cite{Embrechts1980}, Theorem~3(b).

%
%pr10 #&#
%
\begin{prop}\label{Convolution-Lemma}
\begin{enumerate}[(b)]
\item[(a)]If $F\in\mathcal{J}$, then $\overline{F}\asymp\overline
{F^{n\stern}}$
and hence $F^{n\stern}\in\kB$.

\item[(b)]If $F\in\mathcal{J}$ and $\overline{F}\asymp\overline
{G}$, then
$F\stern G\in\kB$.

\item[(c)]If $F^{n\stern}\in\kB$, then $\overline{F}\asymp
\overline{F^{n\stern}}$
and hence $F\in\kB$.
\end{enumerate}
\end{prop}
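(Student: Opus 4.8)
## Proof Plan for Proposition \ref{Convolution-Lemma}

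The plan is to treat a), b), c) in turn, each time reducing the assertion to a crude comparison of tails and then invoking closure of $\mathcal{J}$ under weak tail-equivalence (Proposition~\ref{TailClosure-Lemma}). For a): by Proposition~\ref{lem:Class_B}~a) and the stated inclusions, $F\in\mathcal{J}\subseteq\mathcal{OS}\subseteq\mathcal{OL}$. Using the convolution-tail identity $\overline{F^{(n+1)\stern}}(x)=\overline F(x)+\int_0^x\overline{F^{n\stern}}(x-y)\,dF(y)$, one obtains $\limsup_{x\to\infty}\overline{F^{n\stern}}(x)/\overline F(x)<\infty$ for every $n$ by induction on $n$, the base case $n=2$ being exactly $F\in\mathcal{OS}$; in the inductive step one splits the integral at $x-x_0$, estimates the piece over $[0,x-x_0]$ by $C_n\bigl(\overline{F^{2\stern}}(x)-\overline F(x)\bigr)\le C_nC_2\,\overline F(x)$, and bounds the remaining piece by $\overline F(x-x_0)$, which is $O(\overline F(x))$ because $F\in\mathcal{OL}$. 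Since trivially $\overline{F^{n\stern}}(x)\ge\overline F(x)$ (nonnegative summands), this yields $\overline F\asymp\overline{F^{n\stern}}$, and Proposition~\ref{TailClosure-Lemma} then gives $F^{n\stern}\in\mathcal{J}$.

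For b): Proposition~\ref{TailClosure-Lemma} first gives $G\in\mathcal{J}$, and it remains to show $\overline{F\stern G}\asymp\overline F$. The lower bound $\overline{F\stern G}(x)\ge\overline F(x)$ is immediate. For the upper bound, write $\overline{F\stern G}(x)=\overline F(x)+\int_0^x\overline G(x-y)\,dF(y)$, fix $x_0$ with $\overline G\le c\,\overline F$ on $[x_0,\infty)$, and split at $x-x_0$: the piece over $[0,x-x_0]$ is at most $c\bigl(\overline{F^{2\stern}}(x)-\overline F(x)\bigr)$, hence $O(\overline F(x))$ by a), and the piece over $(x-x_0,x]$ is at most $\overline F(x-x_0)=O(\overline F(x))$ since $F\in\mathcal{OL}$. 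Thus $\overline{F\stern G}\asymp\overline F$, and Proposition~\ref{TailClosure-Lemma} gives $F\stern G\in\mathcal{J}$.

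For c): again it suffices to prove $\overline F\asymp\overline{F^{n\stern}}$, since then Proposition~\ref{TailClosure-Lemma} (now with $F^{n\stern}\in\mathcal{J}$ and $\overline{F^{n\stern}}\asymp\overline F$) gives $F\in\mathcal{J}$. The lower bound is trivial, so the whole content is the upper bound $\overline{F^{n\stern}}(x)\le C\,\overline F(x)$ for large $x$ --- i.e.\ that the one big jump of an $n$-fold sum is, up to a constant, carried by a single summand --- and this is where I expect the real obstacle to lie. Note that once this bound is established, $F\in\mathcal{OL}$ follows automatically (from $F^{n\stern}\in\mathcal{OL}$ together with $\overline F\asymp\overline{F^{n\stern}}$), so there is no separate ``regularity of $F$'' input that has to be arranged beforehand. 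To obtain the upper bound I would argue by contradiction, exploiting the independence of blocks together with part a): if $\overline{F^{n\stern}}(x_k)/\overline F(x_k)\to\infty$ along some $x_k\to\infty$, then on the scales $x_k$ the event $\{S_n>x_k\}$ is dominated by configurations in which no single summand is close to $x_k$ (so that several summands are of order $x_k$); feeding two such independent ``spread-out'' $n$-blocks together then produces, at a level $w_k$ comparable to $x_k$, a non-negligible contribution to $\mathbb P\bigl(\max(Y_1,Y_2)\le w_k-K,\ Y_1+Y_2>w_k\bigr)$ with $Y_1,Y_2$ i.i.d.\ $\sim F^{n\stern}$, i.e.\ one that is not $o\bigl(\overline{F^{2n\stern}}(w_k)\bigr)=o\bigl(\overline{F^{n\stern}}(w_k)\bigr)$ (the last equality by part a)), contradicting $F^{n\stern}\in\mathcal{J}$. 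The delicate point --- and where I expect most of the work --- is to make this quantitative: one must localize the ``spread-out'' configurations on bounded windows and sum them carefully, so that the resulting lower estimate genuinely dominates $\overline{F^{n\stern}}(w_k)$ in spite of possible steep drops or flat stretches of the tails.
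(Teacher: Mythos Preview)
Your treatment of a) and b) is correct, though somewhat more laborious than necessary. For a) the paper bypasses the integral splitting entirely: once $F^{n\stern}\in\mathcal{J}\subset\mathcal{OS}$, the trivial sandwich $\overline{F^{n\stern}}\le\overline{F^{(n+1)\stern}}\le\overline{F^{2n\stern}}\le c_{F^{n\stern}}\overline{F^{n\stern}}$ (the last inequality being exactly the $\mathcal{OS}$ bound for $F^{n\stern}$) gives $\overline{F^{n\stern}}\asymp\overline{F^{(n+1)\stern}}$ in one line, and induction closes. For b) the paper argues by contradiction using the characterization via $g\in\mathcal{G}$ together with Lemmas~\ref{Hilfslemma1} and~\ref{Hilfslemma2}; your direct route via $\overline{F*G}\asymp\overline{F}$ is equally valid and arguably more transparent (and indeed Lemma~\ref{Hilfslemma1} with a) would shorten it further: $\overline{F*G}\asymp\overline{F^{2\stern}}\asymp\overline{F}$).

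Part c) is where your proposal has a genuine gap, and your own hedging (``the delicate point'', ``where I expect most of the work'') reflects this. The plan to first establish $\overline{F}\asymp\overline{F^{n\stern}}$ by a contradiction about ``spread-out configurations'' is not carried out, and it is unclear how to make it precise: from $\overline{F}(x_k)=o(\overline{F^{n\stern}}(x_k))$ you only get that no single summand exceeds $x_k$, not that none is within $K$ of $x_k$; and turning two such ``spread-out'' blocks into a \emph{lower} bound of the correct order on $\wP(Y_1\wedge Y_2>K,\,Y_1+Y_2>w_k)$ at some well-chosen level $w_k$ needs a concrete mechanism that you have not supplied. The paper sidesteps this entirely by reversing the order of the two conclusions: it first proves $F\in\mathcal{J}$ \emph{directly}, and only then reads off $\overline{F}\asymp\overline{F^{n\stern}}$ from a). The key device is a dyadic halving. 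Writing $S_{2^m}^{(1)},S_{2^m}^{(2)}$ for independent sums of $2^m$ copies, one has the set inclusion
\[
\bigl\{S_{2^{m-1}}^{(1)}\wedge S_{2^{m-1}}^{(2)}>h(x),\ S_{2^{m-1}}^{(1)}+S_{2^{m-1}}^{(2)}>x\bigr\}
\subseteq
\bigl\{S_{2^{m}}^{(1)}\wedge S_{2^{m}}^{(2)}>h(x),\ S_{2^{m}}^{(1)}+S_{2^{m}}^{(2)}>x\bigr\},
\]
simply because adding further nonnegative summands can only increase each block. Dividing by $\wP(S_{2^{m-1}}^{(1)}+S_{2^{m-1}}^{(2)}>x)=\overline{F^{2^m\stern}}(x)$ and using that $F^{2^m\stern}\in\mathcal{J}\subset\mathcal{OS}$ controls the denominator ratio $\overline{F^{2^{m+1}\stern}}/\overline{F^{2^m\stern}}$, one gets $F^{2^{m-1}\stern}\in\mathcal{J}$ from $F^{2^m\stern}\in\mathcal{J}$. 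Iterating down gives $F\in\mathcal{J}$; for general $n$ one first pads up to the next power of two via a). This monotonicity trick is the missing idea in your argument for c), and it makes the ``delicate quantitative lower bound'' you were worried about unnecessary.
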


%
%ex11 #&#
%
\begin{example}
The classes $\kS$ and $\mathcal{J}$ are not closed under convolution.
A counterexample for the class $\kS$ is given in \cite{Leslie1989}, Section~3.
%We can take the counterexample from \cite{Leslie1989} to show that
%$\kB$ is not closed under convolution.
Since $\kS\subset\kL$, by the counterexample from
\cite{Leslie1989} and Theorem~3(b) from \cite{Embrechts1980}
(convolution closure of $\kL$) we know there exist two distributions
$F_{1}$, $F_{2}$ such that $F_{1},F_{2}\in\kS$ and $F_{1}\ast
F_{2}\in\kL$
but $F_{1}\ast F_{2}\notin\kS$. Since we have
$\kB\cap\kL=\kS$ from Proposition~\ref{lem:Class_B}(b),
$F_{1},F_{2}\in
\kB$ but $F_{1}\ast F_{2}\notin\kB$.
%\hfill$\diamond$
\end{example}

We now turn to mixture properties of the class $\B$.
Let $X,Y$ be two random variables with distribution functions $F,G \in
\F$.
Recall that $X \vee Y$ (resp. $X \wedge Y$) denotes the pointwise
maximum (resp. minimum) of $X$ and $Y$. We call a random variable $Z$
\emph{mixture} of $X$ and $Y$ with parameter $p \in(0,1)$, if its
distribution function is given by
\[
pF+(1-p)G \in\mathcal F.
\]
%
%{
It is easy to see that if $X$ and $Y$ are independent,
we have for all mixtures $Z$ with $p\in(0,1)$,
%
%e2.5 #&#
%
\begin{equation}
\label{eq:mixequiv} (X\vee Y)\asymp Z.
\end{equation}
%
%}
%{

%pr12 #&#
%
\begin{prop} \label{lem:mix} Let $%{
X,Y
%}
\in\mathcal{F}$
be independent.
\begin{enumerate}[(b)]
\item[(a)]If $%{
X,Y
%}
\in\kB$, then the following are equivalent:
\begin{enumerate}[(iii)]
\item[(i)]$(X\vee Y)\in\kB$;

\item[(ii)]$(X+Y)\in\kB$;

\item[(iii)]$Z\in\kB$.
\end{enumerate}

\item[(b)]If $%{
X,Y
%}
\in\kB$, then $(X\wedge Y)\in\kB$.
\end{enumerate}
\end{prop}
%
%}

The previous statement remains true when $\kB$ is replaced by
$\kS$ (see \cite{Yakymiv1997}, Theorem~1, \cite{Geluk2009}, Theorem~1,
and \cite{Foss2011}, Theorem~3.33).

%
%re13 #&#
%
\begin{rem}
%{
Concerning part (b) of the previous proposition
one may ask if $X,Y \in\kB$ even implies that $X\vee Y$ and $X+Y$ are
weakly tail equivalent which would immediately imply
the equivalence of (i) and (ii).
%{
Perhaps surprisingly, this is
%}
not true in general -- not even under the stronger assumption that $X,Y
\in\kS$ as the example in \cite{Leslie1989}
shows.%}
\end{rem}

%s2.5 #&#
\subsection{Random sums}
\label{ssn:rand_sum}

%In this section we discuss closure properties of the distribution of
%random sums of claim sizes. Such random sums appear in many fields,
%for example \emph{ruin theory}, and are connected with the supremum of
%random walks with large jumps and negative drift.
As before, let $F\in\F$ be the common distribution function of the
i.i.d. random variables $\{X_{i}\}$. Recall the notation
\[
c_{F}=\limsup_{x\rightarrow\infty}\frac{\overline{F^{2\ast
}}(x)}{\overline{F}(x)}.
\]
Denote by $N$ a discrete random variable with values in $\N_{0}$,
independent of the $\{X_{i}\}$, with probability weights $p_{n}:=\P\{
N=n\}$, $n\geq0$
and $p_{0}<1$. Denote by $N^{(1)}$ and $N^{(2)}$ two independent
copies of $N$, and write
\[
(p*p)_{n}:=\P\bigl(N^{(1)}+N^{(2)}=n\bigr),\qquad n
\geq0.
\]
We now consider the random sum
\[
S_{N}:=\sum_{i=1}^{N}X_{i}
\]
%
%{
with
%}
distribution function $F_{N}$. Under a suitable decay condition
on the $(p_{n})$, we obtain the following stability property of $\B$
for a random number of convolutions and convolution roots:

%
%pr14 #&#
%
\begin{prop}\label{lem:Zufaellige-Sum-B}
%{
%
\begin{enumerate}[(b)]
\item[(a)]If $F\in\mathcal{J}$ and $\sum^{\infty
}_{k=1}p_{k}(c_{F}+\varepsilon-1)^{k}<\infty$
for some $\varepsilon>0$, then $\overline{F_{N}}\asymp\overline{F}$
and hence $F_{N}\in\mathcal{J}$.

\item[(b)]If $F_{N}\in\mathcal{J}$ and $\sum^{\infty
}_{k=1}p_{k}(c_{F_{N}}+\varepsilon-1)^{k}<\infty$
for some $\varepsilon>0$, then
%{
$\overline{F}\asymp\overline{F_{N}}$
%}
and hence $F\in\mathcal{J}$.
\end{enumerate}
\end{prop}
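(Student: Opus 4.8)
The plan is to reduce the random-sum statement to the deterministic convolution estimates that are already available from Proposition \ref{Convolution-Lemma} a), together with the summability hypothesis on the $(p_n)$. The crucial quantitative input is the standard Kesten-type bound: if $F \in \kOS$ with constant $c_F$, then for every $\varepsilon > 0$ there is a constant $M = M(\varepsilon) < \infty$ such that $\overline{F^{n\stern}}(x) \le M (c_F + \varepsilon - 1)^n \overline{F}(x)$ for all $n \ge 1$ and all $x \ge 0$. (This is the $\kOS$-analogue of Kesten's lemma; it holds because $c_F - 1$ plays the role that the subexponential "1" plays in the classical estimate, and the proof is the usual induction on $n$ splitting the convolution at level $x/2$ or at a fixed $K$.) Since by Proposition \ref{lem:Class_B} a) we have $\kB \subset \kOS$, this bound is available whenever $F \in \kB$, and similarly for $F_N$ in part b).

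For part a), I would argue as follows. Fix $\varepsilon>0$ as in the hypothesis. Write
\[
\overline{F_N}(x) = \sum_{n=1}^\infty p_n\, \overline{F^{n\stern}}(x).
\]
From the lower bound $\overline{F^{n\stern}}(x) \ge \overline F(x)$ (valid for all $n \ge 1$ since the $X_i$ are nonnegative), one gets $\overline{F_N}(x) \ge (1-p_0)\overline F(x)$, giving the easy half of $\overline{F_N} \asymp \overline F$. For the upper bound, apply the Kesten-type estimate term by term:
\[
\overline{F_N}(x) \le \sum_{n=1}^\infty p_n\, M (c_F + \varepsilon - 1)^n\, \overline F(x) = M\Big(\sum_{n=1}^\infty p_n (c_F+\varepsilon-1)^n\Big)\overline F(x),
\]
and the bracketed series is finite by hypothesis. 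Hence $\overline{F_N} \asymp \overline F$. Since $F \in \kB$ and $\kB$ is closed under weak tail-equivalence (Proposition \ref{TailClosure-Lemma}), we conclude $F_N \in \kB$.

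For part b), the roles of $F$ and $F_N$ are essentially swapped, but the difficulty is that $F$ is now the "unknown". Starting from $F_N \in \kB \subset \kOS$ with constant $c_{F_N}$ and the summability of $\sum_k p_k (c_{F_N}+\varepsilon-1)^k$, I would first show $\overline{F} \asymp \overline{F_N}$ and then invoke Proposition \ref{TailClosure-Lemma} again. One direction, $\overline F(x) = O(\overline{F_N}(x))$ up to a constant, is immediate from $\overline{F_N}(x) \ge p_1 \overline F(x)$ provided $p_1 > 0$; if $p_1 = 0$ one uses the smallest $n$ with $p_n > 0$ together with $\overline{F^{n\stern}}(x) \ge n\,\overline F(x) - o(\cdot)$, or more simply the bound $\overline{F^{n\stern}} \ge \overline F$ does not suffice and one instead needs that $F \in \kOS$ — which is where I expect the main obstacle to lie, since a priori we only know $F_N \in \kOS$, not $F$. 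The way around this is a convolution-root argument in the spirit of Proposition \ref{Convolution-Lemma} c): one shows that the summability condition on $(p_n)$ forces $\overline{F^{n\stern}} \asymp \overline F$ for each fixed $n$ (by comparing $\overline{F_N}$ with the single dominating term and controlling the tail of the series), which in particular yields $c_F < \infty$, i.e.\ $F \in \kOS$; once that is known, the Kesten-type bound for $F$ becomes available and the argument of part a) (reading the inequality in the reverse direction, bounding $\overline{F_N}$ below by $p_1\overline F$ and above by a constant times $\overline F$) closes the proof. The technical heart of part b) is thus the bootstrapping step establishing $F \in \kOS$ from $F_N \in \kOS$ plus summability; everything after that is a repetition of part a).
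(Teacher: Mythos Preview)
Your part a) is correct and matches the paper's argument exactly: Kesten's bound (Lemma \ref{la:kesten}) for the upper estimate, and the trivial lower estimate $\overline{F_N}(x)\ge p_k\,\overline{F^{k\stern}}(x)\ge p_k\,\overline F(x)$ for any $k$ with $p_k>0$, followed by tail-equivalence closure.

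Part b), however, has a genuine gap. You correctly identify that the hard direction is $\overline{F_N}=O(\overline{F})$ and that the obstacle is that a priori only $F_N\in\kOS$, not $F$. But your proposed ``bootstrapping'' --- first showing $\overline{F^{n\stern}}\asymp\overline F$ for each fixed $n$ by ``comparing $\overline{F_N}$ with the single dominating term and controlling the tail of the series'' --- is circular: controlling the tail of the series $\sum_k p_k\overline{F^{k\stern}}(x)$ is precisely what requires a Kesten bound for $F$, which in turn presupposes $F\in\kOS$. No non-circular version of this step is indicated.

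The paper avoids this trap entirely by never needing $F\in\kOS$. The key observation (after reducing to $p_0=0$ by renormalization) is the stochastic domination
\[
\overline{F^{n\stern}}(x)\le \overline{F_N^{\,n\stern}}(x)\qquad\text{for all }n\ge 1,\ x\ge 0,
\]
which holds because $N\ge 1$ a.s.\ forces each independent copy of $S_N$ to dominate a single $X_i$. This lets one apply Kesten's lemma to $F_N$ (not $F$) and obtain the uniform-in-$n$ bound
\[
\frac{\overline{F^{n\stern}}(x)}{\overline{F_N}(x)}\le \frac{\overline{F_N^{\,n\stern}}(x)}{\overline{F_N}(x)}\le c\,(c_{F_N}+\varepsilon-1)^n.
\]
The right-hand side is summable against $(p_n)$ by hypothesis, so dominated convergence applies to the identity $1=\sum_k p_k\,\overline{F^{k\stern}}(x)/\overline{F_N}(x)$. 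If one assumes for contradiction that $\liminf_x \overline{F^{n\stern}}(x)/\overline{F_N}(x)=0$ for \emph{every} $n$, a diagonal sequence argument drives the whole sum to $0$, contradicting $1=1$. Hence some $n$ satisfies $\overline{F^{n\stern}}\asymp\overline{F_N}$; tail closure gives $F^{n\stern}\in\kB$, and convolution-root closure (Proposition \ref{Convolution-Lemma} c)) then yields $\overline F\asymp\overline{F^{n\stern}}\asymp\overline{F_N}$ and $F\in\kB$. The domination $\overline{F^{n\stern}}\le\overline{F_N^{\,n\stern}}$ is the missing idea in your proposal.
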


%
%re15 #&#
%
\begin{rem}
Note that one cannot infer $F_{N}\in\kB$ or $\overline
{F}\asymp\overline{F_{N}}$ from $F\in\kB$ without additional conditions
on $N$. This is true even if $N$ is a geometric random variable, say
with parameter $p\in(0,1)$ and probability weights
$p_k=p^k (1-p)$, $k \ge0$.
Indeed, while it is obvious that the condition of the Proposition is
satisfied for
all $p \in (0, (c_F+ \varepsilon-1)^{-1} )$,
a counterexample is given by the distribution $F\in\kB$
from Example~\ref{ex:light} with geometric $N$ that has a parameter
$p$ close enough to $1$.
%{
To see this,
%}
consider a sequence of i.i.d. random variables $X_{1},X_{2},X_{3},
\ldots$
with distribution function $F\in\kB$ from Example~\ref{ex:light}.
Let $\alpha>0$ be such that $\alpha\mathbb{E}X_{1}>1$. For all $m\in
\mathbb{N}$
we
%{
have
%}
%
\[
\wP(X_{1}+ \cdots+X_{N}>m) \geq\wP\bigl(N\geq \lfloor
\alpha m \rfloor\bigr)\wP(X_{1}+ \cdots+X_{ \lfloor\alpha m
\rfloor}>m)
\]
and
\[
\wP(X_{1}>m)=\int^{\infty}_{m}
\frac{C\mathrm
{e}^{-x}}{1+x^{2}} \,\mathrm{d}x\leq C\mathrm{e}^{-m}.
\]
If $p$ is close enough to $1$ we obtain
%{
by our choice of $\alpha$ and the law of large numbers,
%}
%
\[
\frac{\wP(X_{1}+ \cdots+X_{N}>m)}{\wP(X_{1}>m)}\geq\frac{%{
\mathrm{e}^{m}
%}
}{C}p^{ \lfloor\alpha m \rfloor} \wP(X_{1}+
\cdots +X_{
\lfloor\alpha m \rfloor}>m)\rightarrow\infty,
\]
for $m\rightarrow\infty$, so $\overline{F}\asymp\overline{F_{N}}$ does
not hold. It follows from part (b) of
Proposition~\ref{lem:Zufaellige-Sum-B-2}
%{
below
%}
that $F_{N}\notin\kB$.
\end{rem}
%
%By a simulation with the Panjer algorithm for random sums we can see
%that $F\notin\kOS$ and hence $F\notin\kB$.} \end{rem}

%{
A related result for random sums
in $\kB$ and $\kOS$ can be obtained under the following condition on
$N$ and $c_F$.
%}

%
%pr16 #&#
%
\begin{prop} \label{lem:Zufaellige-Sum-B-2}
Suppose
\[
\liminf_{n\rightarrow\infty}\frac{\wP(N_{1}+N_{2}>n)}{\wP
(N_{1}>n)}>c_{F_{N}}=\limsup
_{x\rightarrow\infty}\frac{\overline
{F_{N}^{2\stern}(x)}}{\overline{F_{N}(x)}},
\]
then the following assertions hold.
\begin{enumerate}[(b)]
\item[(a)]If $F_{N}\in\kOS$, then there exists $m\in\mathbb{N}$ such
that
%{
$\overline{F^{m\stern}}\asymp\overline{F_{N}}$.

\item[(b)]If $F_{N}\in\kB$, then
%{
$\overline{F}\asymp\overline{F_{N}}$
%}
and
hence $F\in\kB$.
\end{enumerate}
\end{prop}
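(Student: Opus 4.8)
The plan is to exploit the decomposition of the random sum $S_N$ into blocks and to use the hypothesis comparing the tail of $N_1+N_2$ with that of $N_1$ to transfer a ``convolution-root'' statement from the level of the counting variable $N$ to the level of $F_N$ itself. First I would recall the generating-function / Kesten-type bound: since $F\in\kOS$ (which holds in both parts, as $\kB\subset\kOS$ by Proposition \ref{lem:Class_B}a) and the hypothesis forces $F_N\in\kOS$), there is a constant depending on $c_F$ and $\varepsilon$ such that $\overline{F^{n\stern}}(x)\le \text{const}\cdot(c_F+\varepsilon-1)^n\,\overline F(x)$ for all $x$ and all $n$; summing against the weights $p_n$ shows $\overline{F_N}\asymp\overline F$ whenever $\sum_k p_k(c_F+\varepsilon-1)^k<\infty$, which recovers one direction and, more importantly, shows that the quantity $c_{F_N}$ is finite and controls everything. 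For part a) I would argue that the condition $\liminf_n \wP(N_1+N_2>n)/\wP(N_1>n)>c_{F_N}$ is exactly what is needed to run a convolution-root argument: writing $\overline{F_N^{2\stern}}$ in terms of $\overline{F_N}$ and $\overline{F^{k\stern}}$, one shows that if no $\overline{F^{m\stern}}$ were weakly equivalent to $\overline{F_N}$, then $\overline{F_N^{2\stern}}/\overline{F_N}$ would be forced below the liminf on the left, a contradiction. Concretely, $F_N$ is itself a random sum $F_N = \sum_k p_k F^{k\stern}$, and $F_N^{2\stern}=\sum_n (p*p)_n F^{n\stern}$, so the ratio $\overline{F_N^{2\stern}}(x)/\overline{F_N}(x)$ is a weighted average (with $x$-dependent weights) of the ratios $\overline{F^{n\stern}}(x)/\overline{F_N}(x)$; the hypothesis says this average stays above $c_{F_N}$, while $c_{F_N}=\limsup \overline{F_N^{2\stern}}/\overline{F_N}$, forcing mass to concentrate on a single convolution power $m$, i.e.\ $\overline{F^{m\stern}}\asymp\overline{F_N}$.

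For part b), given $F_N\in\kB$, part a) already yields some $m$ with $\overline{F^{m\stern}}\asymp\overline{F_N}$; since $\kB$ is closed under weak tail-equivalence (Proposition \ref{TailClosure-Lemma}), we get $F^{m\stern}\in\kB$, and then Proposition \ref{Convolution-Lemma}c) (closure of $\kB$ under convolution roots) gives $F\in\kB$ together with $\overline F\asymp\overline{F^{m\stern}}\asymp\overline{F_N}$, which is the asserted weak tail-equivalence. So part b) is essentially a corollary of part a) combined with the already-established closure properties.

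The main obstacle I expect is making the ``weighted-average forces concentration on one index $m$'' step in part a) rigorous: the weights $\overline{F^{n\stern}}(x)\,p_n / \overline{F_N}(x)$ depend on $x$ and one must show that, along a subsequence realizing the limsup $c_{F_N}$, they cannot spread their mass over two or more distinct convolution powers without pushing either $\overline{F_N^{2\stern}}/\overline{F_N}$ strictly above $c_{F_N}$ or the $N$-ratio strictly below its liminf. This requires a careful bookkeeping argument — perhaps passing to a subsequence, extracting a limiting (sub-probability) distribution on the index set, and using both the $\kOS$ bound to control tails of the index sum and the strict inequality in the hypothesis to rule out genuine mixing. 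I would also need to handle the degenerate possibilities ($N$ bounded, or $p_1$ close to $1$) separately, though these should be easy special cases. The rest of the proof is routine given the earlier propositions.
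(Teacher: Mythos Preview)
Your treatment of part b) is correct and matches the paper exactly: once part a) gives $\overline{F^{m\stern}}\asymp\overline{F_N}$, weak tail-equivalence closure (Proposition~\ref{TailClosure-Lemma}) plus convolution-root closure (Proposition~\ref{Convolution-Lemma}c)) finish the job.

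For part a), however, your plan has a genuine gap. First, a minor slip: you cannot assume $F\in\kOS$ a priori; only $F_N\in\kOS$ is given, so the Kesten bound you invoke applies to $F_N$, not to $F$. More importantly, the ``concentration on a single index'' picture is not the mechanism, and your proposed route (extract a limiting sub-probability on the index set and rule out mixing) is not how the argument closes. The paper argues by contradiction in a different way: suppose $\liminf_x \overline{F^{m\stern}}(x)/\overline{F_N}(x)=0$ for \emph{every} $m$. Fix $m_0$ so large that $\wP(N_1+N_2>k)/\wP(N_1>k)>c_{F_N}+\delta$ for all $k\ge m_0$, choose a sequence $x_n\to\infty$ along which $\overline{F^{m_0\stern}}(x_n)/\overline{F_N}(x_n)\to 0$, and observe that then the contributions of \emph{all} indices $k\le m_0$ to both $\overline{F_N}(x_n)$ and $\overline{F_N^{2\stern}}(x_n)$ are negligible. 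Hence $c_{F_N}\ge\limsup_n J_2(n)/I_2(n)$, where $I_2,J_2$ are the tail sums $\sum_{k>m_0}p_k\overline{F^{k\stern}}(x_n)$ and $\sum_{k>m_0}(p*p)_k\overline{F^{k\stern}}(x_n)$.

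The step you are missing is an Abel-summation (telescoping) rewrite: setting $h_j(x_n)=\overline{F^{j\stern}}(x_n)-\overline{F^{(j-1)\stern}}(x_n)\ge 0$, one gets
\[
\frac{J_2(n)}{I_2(n)}=\frac{\sum_{j>m_0}h_j(x_n)\,\wP(N_1+N_2>j-1)}{\sum_{j>m_0}h_j(x_n)\,\wP(N_1>j-1)}>c_{F_N}+\delta,
\]
since every termwise ratio exceeds $c_{F_N}+\delta$ by choice of $m_0$. This contradicts $c_{F_N}\ge\limsup J_2/I_2$. The negation of the contradiction hypothesis then gives some $m$ with $\liminf_x \overline{F^{m\stern}}(x)/\overline{F_N}(x)>0$; the matching upper bound is immediate from $\overline{F^{m\stern}}\le \overline{F_N}/p_m$ (take $m$ with $p_m>0$, enlarging $m$ if necessary). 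Without this telescoping device you have no way to pass from the hypothesis, which is stated in terms of the \emph{cumulative} tail ratio $\wP(N_1+N_2>n)/\wP(N_1>n)$, to the ratio $J_2/I_2$, which a priori involves the \emph{point masses} $p_k$ and $(p*p)_k$; your ``weighted-average'' formulation does not bridge that gap.
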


In \cite{Watanabe} it is pointed out that $\lim_{n\to\infty}\frac
{p_{n+1}}{p_{n}}=0$
implies $\lim_{n\to\infty}\frac{(p*p)_{n}}{p_{n}}=\infty$,
%{
which
in turn implies $\liminf_{n\rightarrow\infty}\frac{\wP
(N_{1}+N_{2}>n)}{\wP(N_{1}>n)}=\infty$.
%}
From there, we also recall some examples for $N$.

%{\tt What happens if $c_F$ equals $\infty$? (cf also 3.) below.}
%In fact, we will show that this result holds even for the larger class
%$\kOS%$.
%{\tt Is this a new result, or due to Shimura and Watanabe?}

%
%ex17 #&#
%
\begin{example}
The following distributions satisfy the condition
$\liminf_{n\rightarrow\infty}\frac{(p*p)_{n}}{p_{n}}=\infty$:
\begin{enumerate}[(3)]
\item[(1)]Poisson distribution: $p_{n}=\frac{c^{n}}{n!}\mathrm{e}^{-c}$,
$c>0$.
%then $\underset{n\rightarrow\infty}{\mathrm{lim \mathrm{inf}}}
%\frac{(p\otimes p)_{n}}{p_{n}}=\infty$.

\item[(2)]Geometric distribution: $p_{n}=(1-p)p^{n}$, $p \in(0,1)$.
% in this
%case $\underset{n\rightarrow\infty}{\mathrm{lim \mathrm{inf}}}\frac{(p
%\otimes p)_{n}}{p_{n}}=\infty$.

\item[(3)]Negative Binomial distribution: $p_{n}=
%{
\binom{n+r-1}{n}
%}
p^{n}(1-p)^{r}$,
$p\in(0,1)$, $r>0$.
\end{enumerate}
\end{example}

%s3 #&#
\section{Applications}
\label{sec:appl}

%s3.1 #&#
\subsection{Ruin theory and maximum of a random walk}
\label{ssn:ruin}

%Ruin probabilities in risk theory are a classical area of application
%of probabilistic methods in insurance, going back to fundamental work
%of Cram\'er (1930). In this section, we will present a partial
%analogue to the Pakes-Veraverbeke-Embrechts {\tt citation, check
%Asmussen 2010 for history} theorem, which describes the asymptotic
%behaviour of the so-called ruin function, for our new class $\B$. We
%begin by recalling the classical ingredients of the risk process in
%ruin theory.

Let $X_1, X_2,\ldots$ be a family of strictly positive i.i.d. random
variables on a probability space $(\Omega, \A, \P)$ with distribution
function $F_X$ and finite
expectation $\mu_X$. Let $N=\{N(t), t \ge0\}$ be a renewal process
with i.i.d. strictly positive waiting times $W_1, W_2, \ldots$ We
assume that the $W_i$ are independent of the $X_i$, and with finite
expectation $1/\lambda$, for some $\lambda> 0$.
We then define the \emph{total claim amount process} as
\[
S(t):= \sum_{i=1}^{N(t)} X_i,
\qquad t\ge0.
\]
Let $T_n:=W_1 + \cdots+ W_n, n\ge1$ be the arrival times of the
claims, where we set $T_0:=0$.
By $c>0$ we denote the \emph{premium rate} and by $u\ge0$ the \emph
{initial capital}. Finally, we define the risk process, for $u\geq0$, by
\[
Z(t):= u+ct -S(t),\qquad t\ge0.
\]
If the above \emph{claim arrival process} $N$ is a Poisson process, we
are in the classical \emph{Cram\'er--Lundberg} model, otherwise, we are
in the more general \emph{Sparre Andersen} model.
By
\[
\tau:= \inf\bigl\{ k \ge1: Z(T_k) < 0\bigr\}
\]
we denote the \emph{ruin time} (with the usual convention that $\inf
\emptyset= \infty$), and by
\[
\Psi(u) := \P\bigl\{ \tau< \infty| Z(0)=u\bigr\}
\]
we denote the \emph{ruin probability}. This quantity is the central
object of study in \emph{ruin theory}. We will be interested in
obtaining asymptotic results for $\Psi(u)$ for large $u$ if $F_X \in
\B
$. To this end, we first reformulate the classical ruin problem into a
question about the maximum of an associated random walk with negative
drift. We follow the exposition of \cite{Yang2011}. Let
\[
\bar X_k := X_k - c W_k =-
\bigl(Z(T_k)-Z(T_{k-1}) \bigr),\qquad k \ge 1.
\]
Note that the $\bar X_i$ are i.i.d. with values in $\R$. We denote
their distribution function by $F_{\bar X}$. By the strong law of large
numbers, we have
$\Psi(u) \equiv1$ if $\E[\bar X_k] \ge0$ (unless $\bar X_1 \equiv0$).
Otherwise, we say that the \emph{net profit condition} holds, and we
denote $a := \E[\bar X_k] < 0$. Let
\[
\bar S_n:= \bar X_1 + \cdots+ \bar X_n,
\qquad n \ge1, \bar S_0:=0.
\]
Under the net profit condition, this is a discrete-time random walk
with negative drift. For the ruin probability, we obtain
\[
\Psi(u) = \P \Bigl\{ \sup_{n\ge0} \bar S_n > u
\Bigr\},\qquad u \ge0.
\]
%
%Let $\{X_{i},i\geq1\}$ be independent, identically distributed random
%variables with common distribution $F$ on $(-\infty,\infty)$ and
%finite mean $\mu<0$. Let $F^{+}$ denote the distribution function of
%the random variable $X_{1}1_{\{X\geq0\}}$. {\tt mention that we are
%changing the framework here slightly.}
%
%Define the random walk $\{S_{n}:n\geq0\}$ by $S_{0}=0$, $S_{n}=
%\underset{i=1}{\overset{n}{\Sigma}}X_{i}$,
%$n\geq1$.
Hence we have expressed the probability of ruin in terms of the
distribution of the supremum of a random walk with negative drift,
which is the object that we will now investigate. Let
\[
M:=\sup_{n\geq0} \bar S_{n}
\]
be the supremum of the random walk and denote its distribution by
$F_M$. With this notation, we have $\Psi(u) = \overline{F_M}(u)$,
$u\ge0$.

Denote by
\[
%{
\tau_{+}:=\inf\{n\geq1:\bar S_{n}>0\},
%}
\]
the first passage time over zero, with the convention $\inf
\emptyset=\infty$.
Then, the first ascending ladder
height is given by $\bar S_{\tau_{+}}$, which is a defective random
variable (since $\tau_{+}$ may be infinite).
%It is well known that $\bar S_{\tau_{+}}$
%is a defective random variable (see \cite{Veraverbeke82}, Page 57-58),
%i.e. $\wP(\bar S_{\tau_{1}}<\infty)<1$.
Set
\[
\P(\tau_{+} < \infty)=:p <1. %
\]
%
%By conditioning on $\{\tau_{+}<\infty\}$, we can rewrite $F_M$ as a
%geometric random sum of a proper distribution.
We assume that $p>0$ which only excludes uninteresting cases and is
automatically satisfied in the Cram\'er--Lundberg model. Further, let
\[
\overline{G}(x):=\wP(\bar S_{\tau_{+}}>x | \tau_{+}<\infty),
\qquad x \ge0.
\]
It is well known (see \cite{Feller71}, Chapter XII) that the tail of
the distribution $M$ can be calculated by the formula
%
%e3.1 #&#
%
\begin{equation}
\label{eq:W_G_relation} \overline{F_M}(x)=(1-p) \sum
_{n=0}^{\infty} %\overset{\infty}{\underset{n=0}{\Sigma}}
p^{n}
\overline{G^{n*}}(x),\qquad x \ge0.
\end{equation}
Finally, denote by
\[
F_{I}(x):=1- \min \biggl\{ 1,\int^{\infty}_{x}
\overline {F_{\bar{X}}}(y)\,\mathrm{d}y \biggr\},\qquad %{
x
\ge0,%}
\]
the \emph{tail-integrated} distribution of $F_{\bar X}$.
Then, the classical Pakes--Veraverbeke--Embrechts theorem
%cf. \cite{Veraverbeke82},
can be stated as follows (see \cite{Yang2011}).

%in \cite{Veraverbeke82}, page 63, Theorem~4.3 and 4.6 the P-V-E
%Theorem was % applied to the general Andersen (Sparre-Andersen) model
%

%
%th18 #&#
%
\begin{them}[(Pakes--Veraverbeke--Embrechts)] %P-V-E for $\kS$
\label{thm:P-V-E} %{
With the above notation and assumptions,
recalling $a:=\E[\bar{X}_{k}]<0$,
%}
the following assertions are
equivalent:
\begin{enumerate}[(3)]
\item[(1)]$F_{I}\in\kS$;

\item[(2)]$G\in\kS$;

\item[(3)]$F_{M}\in\kS$;

\item[(4)]$\overline{F_{M}}\sim-\frac{1}{a}\overline{F_{I}}$.
\end{enumerate}
\end{them}

%Note that this theorem states the full equivalences between $1)$--$4)$.
Our main goal in this section is to (partially) extend this result from
the class $\mathcal{S}$ to $\kB$. Recall the notation
\[
c_{G}=\limsup_{x\rightarrow\infty}\frac{\overline
{G^{2*}}(x)}{\overline{G}(x)},
\]
%
%{
and from \cite{Yang2011}, Lemma~2.2, that if $F_I \in\mathcal{OL}$,
then $\overline G \asymp\overline{F_I}$.%}

%
%th19 #&#
%
\begin{them} %P-V-E for $\kB$
\label{thm:P-V-E-for-B} With the above notation and $a<0$,
assume additionally that $F_{I}\in\kOL$ and that one of the following
conditions holds:
\begin{enumerate}[(iii)]
\item[(i)]$p(c_{G}+\varepsilon-1)<1$ for some $\varepsilon>0$,% with
%$p:=
%\wP(M=0)$ and $c_{G}:=\underset{x\rightarrow\infty}{ \mathrm{
%\mathrm{lim sup}}}\frac{\overline{G^{2*}}(x)}{\overline{G}(x)}$.

\item[(ii)]$F_{M}\in\kOS$,

\item[(iii)]$F_{I}\in\kB\cap\mathcal{K}^*$.
\end{enumerate}
Then the following assertions are equivalent:
\begin{enumerate}[(3)]
\item[(1)]$F_{I}\in\kB$;

\item[(2)]$G\in\kB$;

\item[(3)]$F_{M}\in\kB$.
\end{enumerate}
Each one of (1), (2) or (3)
%{
combined with each one of
%}
(\textup{i}), (\textup{ii}) or (\textup{iii}) implies
\begin{enumerate}[(4)]
\item[(4)]$\overline{F_{M}}\asymp\overline{G}\asymp\overline{F_{I}}$.
\end{enumerate}
\end{them}

For a (nontrivial) example of a distribution $F_I \in\mathcal{OL}
\cap\B\cap\mathcal{K}^*$, but $\F\notin\mathcal{L}$, see the
recent article \cite{Xu2014}.

\begin{cor}
If $a<0$ and $\overline{F_{I}}\asymp\overline{H}$ for some $H\in
\mathcal{S}$,
then $\overline{F_{M}}\asymp\overline{G}\asymp\overline{F_{I}}$.
\end{cor}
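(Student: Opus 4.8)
The plan is to verify that the hypothesis ``$\overline{F_I}\asymp\overline H$ for some $H\in\kS$'' places $F_I$ into all the classes required to apply Theorem \ref{thm:P-V-E-for-B} under its condition (iii), and then simply to read off assertion~4). Since $a<0$ is assumed and $p>0$ is a standing hypothesis of the section, the only extra facts to establish are $F_I\in\F$, $F_I\in\kOL$, and $F_I\in\kB\cap\kDK$.

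First I would observe that $\overline{F_I}\asymp\overline H$ with $H$ of unbounded support forces $\overline{F_I}(x)>0$ for all $x$, hence $F_I\in\F$. Next: (1) since $\kS\subset\kB$ (immediate from the definitions), we have $H\in\kB$; because $\kB$ is closed under weak asymptotic tail-equivalence (Proposition \ref{TailClosure-Lemma}) and $\overline{F_I}\asymp\overline H$, it follows that $F_I\in\kB$, and in particular $F_I\in\kOL$ since $\kB\subset\kOS\subset\kOL$ by Proposition \ref{lem:Class_B} a). (2) By the standard inclusions $\kS\subset\kL\subset\kDK$ we have $H\in\kDK$, and $\kDK$ is trivially closed under $\asymp$: if $\overline{F_I}(x)\geq c\,\overline H(x)$ for all large $x$ with some $c>0$, then $e^{\lambda x}\overline{F_I}(x)\geq c\,e^{\lambda x}\overline H(x)\to\infty$ for every $\lambda>0$, so $F_I\in\kDK$. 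Consequently condition (iii) of Theorem \ref{thm:P-V-E-for-B} and its extra requirement $F_I\in\kOL$ both hold, assertion~1) of that theorem ($F_I\in\kB$) is satisfied, and assertion~1) combined with (iii) yields assertion~4): $\overline{F_M}\asymp\overline G\asymp\overline{F_I}$.

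There is no genuine obstacle here; the corollary is an immediate consequence of Theorem \ref{thm:P-V-E-for-B} and Proposition \ref{TailClosure-Lemma}, with the only routine points being the one-line stability of $\kDK$ (and of $\kOL$, if one prefers to argue directly rather than through $\kB\subset\kOL$) under weak tail-equivalence. The point worth stressing in the write-up is that one may \emph{not} shortcut through the classical Pakes-Veraverbeke-Embrechts Theorem, because $F_I$ need not belong to $\kS$ --- the class $\kS$ is not closed under weak tail-equivalence --- which is precisely why the conclusion is phrased with $\asymp$ and routed through the larger class $\kB$.
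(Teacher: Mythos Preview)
Your proposal is correct and follows the intended route: the corollary is an immediate consequence of Theorem~\ref{thm:P-V-E-for-B} under condition~(iii), once one observes that $\kS\subset\kB$ transfers to $F_I$ via Proposition~\ref{TailClosure-Lemma}, that $\kB\subset\kOS\subset\kOL$ gives $F_I\in\kOL$, and that $\kDK$ is trivially stable under~$\asymp$. The paper gives no separate proof for the corollary, and your derivation is exactly the one implied by its placement.
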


Note that the analogous weak tail-equivalence does not hold if $F_{I}$
is an exponential
distribution with parameter $\lambda>0$, while one has strong asymptotic
tail-equivalence (up to a constant) in the case $F_{I}\in\mathcal
{S}(\gamma)$,
see \cite{Korshunov1997}.
%}

Theorem~\ref{thm:P-V-E-for-B} is inspired by and should be compared
with the recent partial generalization of Theorem~\ref{thm:P-V-E}
to the even larger class $\kOS$ by Yang and Wang in \cite{Yang2011},
Theorems~1.2 and~1.3,
which is however considerably weaker.
In particular, it does not cover Example~3.2 in \cite{Xu2014}.
One reason is that the class $\kOS$ is not closed under convolution
roots, as opposed to $\kS$
and $\kB$.

Let $H^{+}$ denote the positive part of a distribution
function $H$.

%
%th21 #&#
%
\begin{them} %P-V-E for $\kOS$
\label{thm:P-V-E-for-OS} With the above notation
%{
and $a<0$,
if
%}
$F_{I}\in\kOL$, then
\begin{enumerate}[(b)]
\item[(a)]$\limsup_{x\rightarrow\infty}\frac{\overline
{F_{I}}(x)}{\overline
{F_{M}}(x)}<\infty$;

\item[(b)]
\begin{enumerate}[(ii)]
\item[(i)]$F_{I}\in\kOS$ and

\item[(ii)]$G\in\kOS$ are equivalent;
\end{enumerate}

\item[(c)]
\begin{enumerate}[(iii)]
\item[(iii)]
$\overline{F_{M}}\asymp\overline{G}\asymp\overline{F_{I}}$
yields (\textup{i}) or (\textup{ii}).
\end{enumerate}
\end{enumerate}

If $F_{I}\in\kL$ and $(c_{F_{I}^{+}}-1)<a$, then (\textup{i}) or
(\textup{ii})
yields (\textup{iii}). In this case, (\textup{i}) ((\textup{ii}) or
(\textup{iii})) implies $F_{M}\in\kOS$.
\end{them}

Note that the weak asymptotic tail equivalence (iii)
%cannot be inferred from the previous assumptions, but additionally
requires $F_{I}\in\kL$ as opposed to the situation in
Theorem~\ref{thm:P-V-E-for-B}.
%{
Further, (a) gives only a lower asymptotic bound for $\overline
{F_{M}}$ in terms of $\overline{F_{I}}$.%}

%\begin{example}
%Note that Theorem $\ref{thm:P-V-E-for-OS}$, as opposed to our new
%Theorem~\ref{thm:P-V-E-for-B}, does not yield the weak
%tail-equivalence of $F_{M},G$ and $F_{I}$
%for the integrated claim distribution $F_{I}$ from our Example
%\ref{ex:light-1},
%since in this case $F_{I}\notin\mathcal{L}$. { Further, the
%tail-equivalence
%of $F_{I}$, $G$ and $F_{M}$ cannot be concluded from Theorem
%\ref{thm:P-V-E}.}
%\hfill{}$\diamond$
%\end{example}

%s3.2 #&#
\subsection{Infinitely divisible laws}
\label{sec:invdiv}

In this section, we consider the relation between the asymptotic tail
behaviour of infinitely divisible laws and their L\'evy measures.
Following \cite{Shimura2005}, we denote by $\mathcal{ID}_+$ the class
of all infinitely divisible
distributions $\mu$ on $[0,\infty)$ with Laplace transform
\[
\hat{\mu}(s)=\exp \biggl\{\int^{\infty}_{0}\bigl(
\mathrm {e}^{-st}-1\bigr)\nu(\mathrm{d}t) \biggr\},
\]
where the L\'evy measure $\nu$ satisfies $\overline{\nu}(t)>0$ for
every $t>0$, and
\[
\int^{\infty}_{0}(1\wedge t)\nu(\mathrm{d}t)<\infty.
\]
Define the normalized L\'evy measure $\nu_{1}$ as $\nu_{1}=1_{\{x>1\}
}\nu/\nu(1,\infty)$.
Embrechts \textit{et al.} proved in \cite{Embrechts1979}, Theorem~1, the
following classical result.

%
%th22 #&#
%
\begin{them}
\label{thm:inf_div_s}
Let $\mu$ be a distribution in $\mathcal{ID}_+$ with L\'evy measure
$\nu$. Then the following assertions are equivalent:
\begin{enumerate}[(3)]
\item[(1)]$\mu\in\kS$;

\item[(2)]$\nu_{1}\in\kS$;

\item[(3)]$\overline\mu\sim\overline\nu$.
\end{enumerate}
\end{them}

Shimura and Watanabe partially extended the result of Embrechts from the
class $\kS$ to the class $\kOS$ in \cite{Shimura2005}, Theorem~1.1:

%
%th23 #&#
%
\begin{them}
\label{thm:inf_div_os}
Let $\mu$ be a distribution in $\mathcal{ID}_+$ with L\'evy measure
$\nu$.
\begin{enumerate}[(b)]
\item[(a)]The following are equivalent:
\begin{enumerate}[(2)]
\item[(1)]$\nu_1\in\kOS$;

\item[(2)]$\overline\mu\asymp\overline{\nu_1}$.
\end{enumerate}

\item[(b)]The following are equivalent:
\begin{enumerate}[(3)]
\item[(1)]$\mu\in\kOS$;

\item[(2)]$\nu_{1}^{n*}\in\kOS$ for some $n\geq1$;

\item[(3)]$\overline\mu\asymp\overline{\nu_{1}^{n*}}$ for some
$n\geq1$.
\end{enumerate}

\item[(c)]If $\nu_{1}$ is in $\kOS$, then $\mu$ is in $\kOS$. The converse
does not hold.
\end{enumerate}
\end{them}

Since the class $\B$ is closed under convolution roots, one expects to
be able to improve the
result for $\kOS$ to class $\B$ significantly. Indeed this is possible.

%
%th24 #&#
%
\begin{them}
\label{thm:Infinite-divisibility-and-B}
Let $\mu$ be a distribution in $\mathcal{ID}_+$ with L\'evy measure
$\nu$.
\begin{enumerate}[(b)]
\item[(a)]Then the following assertions are equivalent:
\begin{enumerate}[(2)]
\item[(1)]$\mu\in\kB$;

\item[(2)]$\nu_{1}\in\kB$.
\end{enumerate}

\item[(b)]If (1) or (2) holds, then ${\overline\mu}\asymp{\overline
\nu_{1}}$.
\end{enumerate}
\end{them}

%{\tt can we get complete equivalence?}

Since the proof is simple, we refrain from postponing it to the next
section and state it here.

\begin{pf*}{Proof of Theorem~\ref{thm:Infinite-divisibility-and-B}}
(a) From Theorem~\ref{thm:inf_div_os}(b),
$\kB\subseteq\kOS$, Proposition~\ref{TailClosure-Lemma}, $\mu\in
\kB$
we infer $\overline{\mu}\asymp\overline{\nu_{1}^{n*}}$ and $\nu
_{1}^{n*}\in\kB$
for some $n\geq1$. The equivalence $\mu\in\kB \Leftrightarrow \nu_{1}\in\kB$ follows immediately from Proposition~\ref
{Convolution-Lemma}(c).

(b) If (1) holds the assertion follows from Theorem~\ref{thm:inf_div_os}(b),
Propositions~\ref{TailClosure-Lemma} and~\ref{Convolution-Lemma}(c).
If (2)
holds, then the assertion follows from Theorem~\ref{thm:inf_div_os}(a)
and $\kB\subset\kOS$.
\end{pf*}

%s4 #&#
\section{Proofs}
\label{sn:proofs}

Throughout the proofs, we will use the following notation. Denote by
$X,X_{1},X_{2},\ldots$ i.i.d. random variables with
common distribution function $F\in\F$, and by ${Y,Y_{1},Y_{2},\ldots}$
i.i.d. random variables with common distribution function
$G\in\F$. By $X_{k,n}$ we denote the $k$th largest
element (pointwise) out of $X_{1},\ldots,X_{n}$, $1\le k\le n$, and
by $X_{k,(l,\ldots,m)}$ the $k$th largest element (pointwise) out
of $X_{l},\ldots,X_{m}$, $1\le l\le m,1\le k\le m-l+1$. Further,
let
\[
S_{n}:=\sum^{n}_{k=1}X_{{k}}
\quad\mbox{and}\quad\hat {S}_{n}:=\sum^{n}_{k=1}Y_{k}.
\]
Finally, denote by $S_{n}^{(i)}$,
respectively $\hat{S}_{n}^{(i)}$, $i=1,\ldots,4$, independent
identically distributed copies of $S_{n}$ respectively $\hat{S}_{n}$.
We begin with several technical lemmas, which we collect
here for reference.

%
%le25 #&#
%
\begin{lem}\label{Hilfslemma1} Let $F, G, H, I\in\mathcal{F}$.
Suppose $\overline{F}\asymp\overline{G}$ and $\overline{H}\asymp
\overline{I}$.
Then $\overline{F*H}\asymp\overline{G*I}$.
\end{lem}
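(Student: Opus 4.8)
The plan is to reduce the statement to an estimate that, upon conditioning on one of the two summands, expresses the tail of a convolution as an integral against the tail of the other distribution, and then to transport the weak equivalences through that integral. Write $X,X'$ for independent random variables with laws $F,H$ and $Y,Y'$ for independent random variables with laws $G,I$. By hypothesis there are constants $0<c_1\le c_2<\infty$ and $x_0\ge 0$ with $c_1\overline G(x)\le\overline F(x)\le c_2\overline G(x)$ and $0<d_1\le d_2<\infty$, $y_0\ge 0$ with $d_1\overline I(x)\le\overline H(x)\le d_2\overline I(x)$ for all $x\ge\max(x_0,y_0)=:t_0$. The goal is to produce analogous two-sided bounds for $\overline{F*H}$ against $\overline{G*I}$ for all large $x$.

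The key step is the decomposition, for $x\ge 2t_0$,
\[
\overline{F*H}(x)=\wP(X+X'>x)=\int_{[0,\infty)}\overline F\big((x-s)^+\big)\,dH(s),
\]
and I would split the integral at $s=t_0$ and at $s=x-t_0$ into three pieces: $s\in[0,t_0]$, $s\in(t_0,x-t_0]$, and $s\in(x-t_0,\infty)$. On the middle piece both arguments $x-s$ and $s$ exceed $t_0$, so the two-sided bounds on $\overline F$ versus $\overline G$ and on $dH$ versus $dI$ (more conveniently: integrate by parts to bring in $\overline H$ versus $\overline I$, or use Fubini to rewrite the whole convolution tail symmetrically as $\int\!\!\int_{u+v>x}dF(u)\,dH(v)$ and compare regions) yield that this piece is squeezed between constant multiples of the corresponding middle piece for $\overline{G*I}$. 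The two boundary pieces, where one of the two variables is bounded by $t_0$, are each comparable to $\overline F(x-t_0)$ respectively $\overline H(x-t_0)$ up to the total masses $H([0,t_0])\le 1$ and $F([0,t_0])\le 1$; these are in turn bounded above and below (for large $x$) by constant multiples of $\overline{F*H}(x)$ and, via the hypotheses, of $\overline{G*I}(x)$, since trivially $\overline{F*H}(x)\ge \overline F(x)\,H([0,\varepsilon])$ and $\ge\overline H(x)\,F([0,\varepsilon])$ for any fixed $\varepsilon>0$ with $H([0,\varepsilon])>0$, $F([0,\varepsilon])>0$ (such $\varepsilon$ exists, or one works at $0$). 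Assembling the three pieces gives $\overline{F*H}\asymp\overline{G*I}$.

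Cleaner still, and the route I would actually write: it suffices to prove the one-variable statement that $\overline F\asymp\overline G$ implies $\overline{F*H}\asymp\overline{G*H}$ for any fixed $H\in\F$, because then $\overline{F*H}\asymp\overline{G*H}=\overline{H*G}\asymp\overline{I*G}=\overline{G*I}$, chaining two applications. For the one-variable statement, from $c_1\overline G\le\overline F\le c_2\overline G$ on $[t_0,\infty)$ one gets, for $x\ge t_0$,
\[
\overline{F*H}(x)=\int_{[0,\infty)}\overline F\big((x-s)^+\big)\,dH(s)
\le \int_{[0,x-t_0]}c_2\,\overline G(x-s)\,dH(s)+\int_{(x-t_0,\infty)}dH(s),
\]
and the last term is $\overline H(x-t_0)\le \overline{H*G}(x-t_0)/G([0,\varepsilon])\cdot$(const), which is itself $\le$ const$\cdot\overline{G*H}(x)$ once one checks $\overline{G*H}$ does not drop by more than a bounded factor over a window of length $t_0$ — and this last fact is exactly where one must be slightly careful, since $G*H$ need not be long-tailed. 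I would handle it by the reverse bound $\overline{G*H}(x-t_0)\le \overline{G*H}(x)+\wP(x-t_0<X+X'\le x)$ and absorb the increment, or more simply by noting $\overline H(x-t_0)\le \overline{F*H}(x)\cdot$const is false in general but $\overline H(x-t_0)\asymp$ the boundary contribution which is dominated after summing; the tight way is to keep the symmetric Fubini form throughout so that every boundary term is literally one of the pieces of $\overline{G*I}$.

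\medskip
\noindent\emph{Main obstacle.} The only genuine subtlety — and the step I expect to cost the most care — is that $F*H$ and $G*I$ need not be long-tailed, so one cannot freely replace $\overline{F*H}(x-t_0)$ by $\overline{F*H}(x)$; all shift-by-$t_0$ comparisons must be justified either by a direct two-sided squeeze that never shifts the convolution tail (best done by writing everything as a double integral over $\{u+v>x\}$ and comparing the four corner regions) or by an explicit bound on the mass $\wP(x-t_0<X+X'\le x)$ in terms of $\overline{F*H}(x)$. Everything else is bookkeeping with the constants $c_1,c_2,d_1,d_2$ and the fixed threshold $t_0$.
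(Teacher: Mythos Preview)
The paper does not prove this lemma itself; it simply cites \cite{Shimura2005}, Proposition~2.7. So there is no in-paper argument to compare against, only your direct attempt.

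Your chaining reduction to the one-variable statement ($\overline F\asymp\overline G\Rightarrow\overline{F*H}\asymp\overline{G*H}$ for any fixed $H\in\mathcal F$) is the right strategy, and the argument goes through cleanly. But the ``main obstacle'' you flag is a phantom. You write that $\overline H(x-t_0)\le\mathrm{const}\cdot\overline{F*H}(x)$ ``is false in general''; in fact it is always true in the present setting. Since $F\in\mathcal F$ has unbounded support, $\overline F(t_0)>0$, and independence gives
\[
\overline{F*H}(x)\;\ge\;\wP(X>t_0,\;X'>x-t_0)\;=\;\overline F(t_0)\,\overline H(x-t_0),
\]
so $\overline H(x-t_0)\le\overline{F*H}(x)/\overline F(t_0)$. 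The identical argument with $G$ in place of $F$ yields $\overline H(x-t_0)\le\overline{G*H}(x)/\overline G(t_0)$, which is precisely the boundary-term bound you need. No long-tailedness of $G*H$ is required, and there is no need to control $\wP(x-t_0<X+X'\le x)$ or to fall back on the symmetric double-integral form. With this single observation your upper bound closes immediately:
\[
\overline{F*H}(x)\;\le\; c_2\int_{[0,x-t_0]}\overline G(x-s)\,dH(s)+\overline H(x-t_0)\;\le\;\Big(c_2+\frac{1}{\overline G(t_0)}\Big)\,\overline{G*H}(x),
\]
and the lower bound is obtained by swapping the roles of $F$ and $G$.
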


{A} proof can be found in \cite{Shimura2005}, Proposition~2.7.
Recall from Section~\ref{ssn:basic} that $\kG$ denotes the set of
nonnegative, unbounded and nondecreasing real functions.
%We need an another lemma.

%
%le26 #&#
%
\begin{lem}
\label{Hilfslemma2} Suppose $g\in\kG$. Then:
\[
\limsup_{x\rightarrow\infty}\frac{\wP(S_{2}>x,X_{1}\wedge
X_{2}>g(x))}{\wP(\hat{S}_{2}>x,Y_{1}\wedge Y_{2}>g(x))}\leq \biggl(\limsup
_{x\rightarrow\infty}\frac{\overline{F}(x)}{\overline{G}(x)} \biggr)^{2}.
\]
\end{lem}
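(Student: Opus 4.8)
\textbf{Plan of proof for Lemma \ref{Hilfslemma2}.} The plan is to bound the numerator from above by splitting the event $\{S_2 > x, X_1 \wedge X_2 > g(x)\}$ according to which of the two summands is the larger one, and then to estimate the resulting pieces by conditioning on the smaller variable and using independence. Set $L:=\limsup_{x\to\infty}\overline F(x)/\overline G(x)$; we may assume $L<\infty$, otherwise there is nothing to prove. Fix $\delta>0$; then for all $x$ large enough, $\overline F(t)\le (L+\delta)\overline G(t)$ for \emph{all} $t\ge g(x)$ (here we use $g(x)\to\infty$, so that for $x$ large the relevant range $t\ge g(x)$ lies in the region where the $\limsup$ bound has kicked in — this is the one genuinely delicate point).

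First I would write, using $X_1\wedge X_2>g(x)$ and symmetry,
\[
\wP\big(S_2>x,\,X_1\wedge X_2>g(x)\big)\le 2\,\wP\big(X_1+X_2>x,\,g(x)<X_1\le X_2\big)
\]
and bound this by conditioning on $X_1$: on $\{X_1 = s\}$ with $g(x)<s$, the event forces $X_2>\max\{s, x-s\}\ge g(x)$, so
\[
\wP\big(X_1+X_2>x,\,g(x)<X_1\le X_2\big)=\int_{g(x)}^{\infty}\overline F\big(\max\{s,x-s\}\big)\,dF(s).
\]
Since $\max\{s,x-s\}\ge g(x)$ on the range of integration, we may replace $\overline F$ in the integrand by $(L+\delta)\overline G$ and $dF(s)$ by noting $F$ and $G$ are only compared through tails — here it is cleanest to integrate by parts or, more simply, to re-expand: bound $\int_{g(x)}^\infty \overline F(\max\{s,x-s\})\,dF(s)\le (L+\delta)\int_{g(x)}^\infty \overline G(\max\{s,x-s\})\,dF(s)$ and then handle the remaining $dF(s)$ by the same trick applied to the other variable, i.e.\ rewrite the double integral symmetrically as $\tfrac12\iint_{s\wedge t>g(x)}\mathbf 1_{s+t>x}\,dF(s)\,dF(t)$ and replace \emph{one} $dF$ at a time by $(L+\delta)\,dG$ (legitimate because replacing $\overline F\le (L+\delta)\overline G$ pointwise on $[g(x),\infty)$ is equivalent, via integration by parts against a nonnegative decreasing test function, to the corresponding inequality on the measures restricted to that half-line).

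Carrying this out for both variables yields
\[
\wP\big(S_2>x,\,X_1\wedge X_2>g(x)\big)\le (L+\delta)^2\,\wP\big(\hat S_2>x,\,Y_1\wedge Y_2>g(x)\big),
\]
and dividing and letting $x\to\infty$ gives $\limsup$ of the ratio $\le (L+\delta)^2$; letting $\delta\downarrow 0$ finishes the proof. The main obstacle I anticipate is making the ``replace $dF$ by $(L+\delta)\,dG$ on $[g(x),\infty)$'' step fully rigorous: the clean way is to phrase everything in terms of tails, writing $\wP(S_2>x,\,X_1\wedge X_2>g(x))=\int_{(g(x),\infty)^2}\mathbf 1_{\{s+t>x\}}\,dF(s)\,dF(t)$, integrating by parts in $s$ to turn $dF(s)$ into $\overline F$ evaluated at the endpoint $x-t$ (which is $\ge g(x)$ whenever $t\le x-g(x)$, the only $t$ that contribute) plus a boundary term at $g(x)$, bounding the $\overline F$ factor by $(L+\delta)\overline G$, and reversing the integration by parts; then repeating in the $t$ variable. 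One must also separately check the easy regime $t>x-g(x)$ (equivalently $s<g(x)$), which contributes nothing since it is incompatible with $X_1>g(x)$ once $x>2g(x)$. This bookkeeping is routine but must be done carefully to get exactly the square of $L$.
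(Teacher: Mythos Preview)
The paper does not give its own proof of this lemma; it simply cites \cite{Foss2011}, Lemma~2.36. So there is no in-paper argument to compare against, and your task was essentially to reconstruct the standard proof.

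Your overall strategy---write the numerator as the double integral $\int_{(g(x),\infty)^2}\mathbf 1_{\{s+t>x\}}\,dF(s)\,dF(t)$ and replace the two $F$'s by $G$ one at a time, picking up a factor $(L+\delta)$ each time---is exactly right and does yield the claimed bound. However, your execution is tangled, and one of your justifications is false as stated.

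First, the detour through $\{X_1\le X_2\}$ is unnecessary and creates a non-monotone integrand $s\mapsto\overline G(\max\{s,x-s\})$, which is precisely what forces you back to the symmetric form anyway. Working directly with the double integral, the inner integral over $s$ is simply
\[
\int_{(g(x),\infty)}\mathbf 1_{\{s>x-t\}}\,dF(s)=\overline F\bigl(g(x)\vee(x-t)\bigr),
\]
already a \emph{tail} of $F$ at a point $\ge g(x)$; no integration by parts is involved, just Fubini. Bound this by $(L+\delta)\,\overline G\bigl(g(x)\vee(x-t)\bigr)$, undo Fubini to obtain $\wP(Y_1+X_2>x,\,Y_1\wedge X_2>g(x))$, and repeat in the other variable.

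Second, your parenthetical claim---that the pointwise tail bound $\overline F\le(L+\delta)\overline G$ on $[g(x),\infty)$ transfers to the measures ``via integration by parts against a nonnegative \emph{decreasing} test function''---is false. Take $F=\delta_1$, $G=\delta_2$: then $\overline F\le\overline G$ everywhere, yet with $\phi=\mathbf 1_{[0,3/2]}$ one has $\int\phi\,dF=1>0=\int\phi\,dG$. The transfer \emph{does} hold for nondecreasing test functions (and indeed $t\mapsto\overline G(g(x)\vee(x-t))$ is nondecreasing), but the clean Fubini route above avoids the issue altogether.

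Third, your final sentence (``the easy regime $t>x-g(x)$ \dots contributes nothing'') is also wrong: in that regime the inner integral equals $\overline F(g(x))$, not zero. It is still covered by the same tail bound, so this does not break the argument, but the claim as written is incorrect.

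In short: the idea is sound; drop the $X_1\le X_2$ split and the ``decreasing test function'' remark, and observe that two applications of Fubini plus the pointwise tail bound on $[g(x),\infty)$ give the result directly.
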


{A} proof can be found in \cite{Foss2011}, Lemma~2.36.

%
%le27 #&#
%
\begin{lem}\label{Hilfslemma3}
For each $F\in\kF$, $c\geq0$ and $n\ge2$, we have
\[
\lim_{K\rightarrow\infty}\limsup_{x\rightarrow\infty} \wP
(X_{1,n}>x-c,X_{2,n}>K|S_{n}>x)=0.
\]
\end{lem}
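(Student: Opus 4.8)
The plan is to bound the conditional probability directly from above by a quantity depending only on $K$ (and on $n$, $c$) that tends to $0$ as $K\to\infty$, uniformly in $x$. The two ingredients are an elementary union bound for the numerator and a lower bound for the denominator $\wP(S_{n}>x)$ that is chosen so as to cancel the critical factor.

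First I would treat the numerator. On the event $\{X_{1,n}>x-c,\,X_{2,n}>K\}$ there are two distinct indices $i\neq j$ with $X_{i}>x-c$ and $X_{j}>K$ (take $i$ to attain the maximum and $j$ to attain the second largest value; here $n\ge2$ is used). Hence
\[
\{X_{1,n}>x-c,\,X_{2,n}>K\}\subseteq\bigcup_{1\le i\neq j\le n}\{X_{i}>x-c,\,X_{j}>K\},
\]
and the union bound together with the i.i.d.\ assumption yields, for every $x$ and $K$,
\[
\wP(X_{1,n}>x-c,\,X_{2,n}>K,\,S_{n}>x)\le\wP(X_{1,n}>x-c,\,X_{2,n}>K)\le n(n-1)\,\overline{F}(x-c)\,\overline{F}(K).
\]

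Second — and this is the only step requiring a little care — I would lower-bound the denominator not by the crude $\overline{F}(x)$ but by a quantity still carrying the factor $\overline{F}(x-c)$, so that the two occurrences of $\overline{F}(x-c)$ cancel and no long-tailedness assumption on $F$ is needed. By independence,
\[
\wP(S_{n}>x)\ge\wP\Big(X_{1}>x-c,\ \textstyle\sum_{i=2}^{n}X_{i}>c\Big)=\overline{F}(x-c)\cdot\delta,\qquad\delta:=\wP\Big(\textstyle\sum_{i=2}^{n}X_{i}>c\Big),
\]
and $\delta>0$ because $F\in\kF$ has unbounded support (in particular $\wP(X_{2}>c)>0$, so $\overline{F}(x-c)>0$ for all $x$ as well). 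Combining the two displays gives, for all $x$,
\[
\wP(X_{1,n}>x-c,\,X_{2,n}>K\mid S_{n}>x)\le\frac{n(n-1)}{\delta}\,\overline{F}(K).
\]
Taking $\limsup_{x\to\infty}$ leaves the right-hand side unchanged, and then letting $K\to\infty$ finishes the proof since $\overline{F}(K)\to0$.

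I do not foresee a genuine obstacle: the entire content is the observation in the second step that the lower bound for $\wP(S_{n}>x)$ must be shifted by $c$ in order to cancel the $\overline{F}(x-c)$ produced by the numerator — bounding $\wP(S_{n}>x)\ge\overline{F}(x)$ instead would leave the ratio $\overline{F}(x-c)/\overline{F}(x)$, which need not be bounded for a general $F\in\kF$. Everything else is a routine union bound and the positivity of $\delta$, which uses only that $F$ has unbounded support.
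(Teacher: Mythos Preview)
Your proof is correct and follows essentially the same strategy as the paper: bound the numerator by $\wP(X_{1,n}>x-c,\,X_{2,n}>K)$ and lower-bound $\wP(S_n>x)$ by an event that forces one coordinate to exceed $x-c$, so that the factor $\overline{F}(x-c)$ cancels. The only difference is in execution: the paper uses the exact order-statistics identity $\wP(X_{1,n}>a,\,X_{2,n}>b)=1-(1-\overline{F}(a))^n-n\overline{F}(a)F(b)^{n-1}$ for both numerator and denominator (with $b=K$ and $b=c$ respectively, since $\{X_{1,n}>x-c,\,X_{2,n}>c\}\subseteq\{S_n>x\}$) and then expands as $x\to\infty$, whereas your union bound $n(n-1)\overline{F}(x-c)\overline{F}(K)$ and lower bound $\overline{F}(x-c)\,\wP(\sum_{i\ge 2}X_i>c)$ bypass that computation entirely and give the result without any asymptotic expansion.
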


\begin{pf}
For $x-c\ge K\ge c$, we have
\begin{eqnarray*}
&&\wP(X_{1,n}> x-c,X_{2,n}>K|S_{n}>x)
\\
&&\quad\le\frac{\wP(X_{1,n}>x-c,X_{2,n}>K)}{\wP
(X_{1,n}>x-c,X_{2,n}>c)}=\frac{1-(1-\overline{F}(x-c))^{n}-n\overline
{F}(x-c)(F(K))^{n-1}}{1-(1-\overline{F}(x-c))^{n}-n\overline
{F}(x-c)(F(c))^{n-1}}
\\
&&\quad= \frac{n\overline{F}(x-c)(1-F(K)^{n-1})+\mathrm
{o}(\overline
{F}(x-c))}{n\overline{F}(x-c)(1-F(c)^{n-1})+\mathrm{o}(\overline
{F}(x-c))}=\frac
{1-F(K)^{n-1}+\mathrm{o}(1)}{1-F(c)^{n-1}+\mathrm{o}(1)},
\end{eqnarray*}
and the result follows by passing to the limit.
\end{pf}

%
%le28 #&#
%
\begin{lem}\label{Hilfslemma4}
Let $\gamma\geq0$ and $F\in\mathcal{L}(\gamma)$.
Then $F\in\mathcal{S}(\gamma)$ if and only if
%
%e4.1 #&#
%
\begin{equation}
\label{eq:inlemma28} \mathbb{P}\bigl(X_{1}+X_{2}>x,
\min(X_{1},X_{2})>h(x)\bigr)=\mathrm {o}\bigl(
\overline{F}(x)\bigr) \qquad\mbox{as } x\rightarrow\infty
\end{equation}
for all $h\in\mathcal{G}$.
\end{lem}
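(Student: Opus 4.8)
\textbf{Proof plan for Lemma \ref{Hilfslemma4}.}

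The plan is to prove the two implications separately, and in both directions to reduce everything to the behaviour of $\overline{F^{2*}}(x)/\overline F(x)$, exploiting the decomposition of the event $\{X_1+X_2>x\}$ according to which variables are large. Throughout, since $F\in\mathcal L(\gamma)$, the quantity $\overline F(x+y)/\overline F(x)$ converges to $e^{-\gamma y}$, and this is the only analytic input I expect to need about $\mathcal L(\gamma)$; in particular for $\gamma=0$ the argument should specialize to the well-known characterization of $\mathcal S$ inside $\mathcal L$ (cf.\ Lemma \ref{Hilfslemma3} and the characterizations in Section \ref{ssn:charact}).

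For the ``only if'' direction, assume $F\in\mathcal S(\gamma)$, so $\overline{F^{2*}}(x)\sim 2c\,\overline F(x)$ for some $c\in(0,\infty)$. Fix $h\in\mathcal G$. First I would record the standard lower bound: splitting on whether $X_1$ or $X_2$ exceeds $h(x)$ (which, since $h(x)\le x/2$ eventually, together with $X_1+X_2>x$ forces the other one to be below $x-h(x)$ only in part of the range), one gets after an inclusion–exclusion that
\[
\overline{F^{2*}}(x) \;=\; 2\int_{[0,h(x)]}\overline F(x-t)\,F(dt)\;+\;\mathbb P\big(X_1+X_2>x,\ \min(X_1,X_2)>h(x)\big).
\]
Since $F\in\mathcal L(\gamma)$ one shows $\int_{[0,h(x)]}\overline F(x-t)F(dt)\sim \big(\int_0^\infty e^{\gamma t}F(dt)\big)\overline F(x)$ by dominated convergence (using $h\in\mathcal G$ so that $h(x)\to\infty$, and a uniform version of the $\mathcal L(\gamma)$ relation on compacts; for $\gamma>0$ one must check $\int e^{\gamma t}F(dt)<\infty$, which follows from $F\in\mathcal S(\gamma)\subset\mathcal L(\gamma)$ by a standard argument, or can be taken as part of the known theory of $\mathcal S(\gamma)$). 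Comparing the two displays, the middle term is $O(\overline F(x))$, and then equality in the asymptotic $\overline{F^{2*}}\sim 2c\,\overline F$ forces the $\min$-term to be $o(\overline F(x))$, which is \eqref{eq:inlemma28}. The one delicate point here is justifying that the convolution asymptotics pin down the constant exactly rather than just up to $O(1)$; this is where $F\in\mathcal S(\gamma)$ (not merely $\mathcal{OS}$) is used.

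For the ``if'' direction, assume $F\in\mathcal L(\gamma)$ and that \eqref{eq:inlemma28} holds for all $h\in\mathcal G$. Using the same decomposition,
\[
\overline{F^{2*}}(x) \;=\; 2\int_{[0,h(x)]}\overline F(x-t)\,F(dt)\;+\;o(\overline F(x)),
\]
and I need the first term to be asymptotically $2c\,\overline F(x)$ for a finite constant $c$. Choosing $h$ growing slowly enough (e.g.\ so that $\overline F(x-h(x))\sim\overline F(x)$, possible because $F\in\mathcal L(\gamma)$ for $\gamma=0$; for $\gamma>0$ one instead controls $\overline F(x-t)/\overline F(x)\le C e^{\gamma t}$ for $t\le h(x)$ via the uniform convergence theorem for $\mathcal L(\gamma)$), dominated convergence gives $\int_{[0,h(x)]}\overline F(x-t)F(dt)\to \big(\int_0^\infty e^{\gamma t}F(dt)\big)\overline F(x)\cdot(1+o(1))$, provided $\int_0^\infty e^{\gamma t}F(dt)<\infty$. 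Finiteness of this integral is the genuine obstacle in this direction: it does not follow from $F\in\mathcal L(\gamma)$ alone, so I would argue by contradiction — if $\int e^{\gamma t}F(dt)=\infty$ then for slowly growing $h$ the term $2\int_{[0,h(x)]}\overline F(x-t)F(dt)$ is \emph{not} $O(\overline F(x))$, hence $\limsup \overline{F^{2*}}(x)/\overline F(x)=\infty$, contradicting the fact that \eqref{eq:inlemma28} plus the decomposition forces this ratio to stay bounded (indeed to converge). Once finiteness is in hand, set $c:=\int_0^\infty e^{\gamma t}F(dt)\in[1,\infty)$; then $\overline{F^{2*}}(x)\sim 2c\,\overline F(x)$, which together with $F\in\mathcal L(\gamma)$ is exactly the definition of $\mathcal S(\gamma)$.

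The main obstacle, then, is the bookkeeping around the constant $c=\int_0^\infty e^{\gamma t}\,F(dt)$: proving it is finite from \eqref{eq:inlemma28} (via the contradiction argument above) and proving the truncated integral $\int_{[0,h(x)]}\overline F(x-t)F(dt)$ converges to $c\,\overline F(x)(1+o(1))$ uniformly enough, for which I would invoke the uniform convergence theorem for $\mathcal L(\gamma)$ (the analogue for $\mathcal L(\gamma)$ of Lemma \ref{Hilfslemma1}-type estimates, available in the literature, e.g.\ \cite{Embrechts1982,Foss2011}). Everything else is the routine inclusion–exclusion split of $\{X_1+X_2>x\}$ into the ``one big, one small'' and ``both large'' parts, exactly as in Lemma \ref{Hilfslemma3}.
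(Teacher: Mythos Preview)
The paper does not actually prove this lemma: it cites \cite[Proposition 2]{Asmussen2003} for the case $\gamma=0$ and asserts that the case $\gamma>0$ is analogous. Your plan is precisely the standard argument behind that citation --- split $\{X_1+X_2>x\}$ according to whether $\min(X_1,X_2)\le h(x)$, identify the ``one big, one small'' piece as $2\int_{[0,h(x)]}\overline F(x-t)\,F(dt)$, and use $F\in\mathcal L(\gamma)$ to show this is asymptotic to $2c\,\overline F(x)$ with $c=\int_0^\infty e^{\gamma t}\,F(dt)$ --- so in substance your approach coincides with the paper's (deferred) proof.

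One step needs tightening. In the ``if'' direction your argument for $c<\infty$ is circular as written: you claim that if $c=\infty$ then $\overline{F^{2*}}/\overline F\to\infty$, ``contradicting the fact that \eqref{eq:inlemma28} plus the decomposition forces this ratio to stay bounded'' --- but nothing you have established forces boundedness. The clean fix is to first pass from the $h$-formulation of \eqref{eq:inlemma28} to the equivalent fixed-level formulation (exactly the argument showing $\mathcal J^{(n)}=\mathcal J_1^{(n)}$ in Proposition~\ref{lem:basic-properties}): for every $\varepsilon>0$ there exists a \emph{fixed} $K$ with $\limsup_x \mathbb P(X_1+X_2>x,\min>K)/\overline F(x)<\varepsilon$. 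Splitting at this fixed $K$ and using $F\in\mathcal L(\gamma)$ on the compact interval $[0,K]$ (where dominated convergence is unproblematic) gives
\[
\limsup_{x\to\infty}\frac{\overline{F^{2*}}(x)}{\overline F(x)}\;\le\;2\int_{[0,K]}e^{\gamma t}\,F(dt)+\varepsilon\;<\;\infty,
\]
while the matching lower bound (via Fatou) yields both that the limit exists and that $c=\int_0^\infty e^{\gamma t}\,F(dt)<\infty$. With this adjustment your outline is complete and matches the intended argument.
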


The case $\gamma=0$ is shown in \cite{Asmussen2003}, Proposition~2, and
the case $\gamma>0$ is analogous.
In some proofs, we will need the dominated convergence theorem and
for its application an upper bound for $\overline{F^{n\ast
}}(x)/\overline{F}(x)$
is required. One such is given by the lemma below, known as Kesten's
lemma.

%
%le29 #&#
%
\begin{lem}
\label{la:kesten} If $F\in\kOS$ then, for every $\varepsilon>0$,
there exists $c>0$ such that for all $n\geq1$ and\vspace*{-1pt} $x\geq0$:
\[
\frac{\overline{F^{n\ast}}(x)}{
\overline{F}(x)} \leq c(c_{F}+\varepsilon-1)^{n}.
\]
\end{lem}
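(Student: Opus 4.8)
The plan is to reduce the statement to a uniform bound on
\[
A_{n}:=\sup_{x\ge0}\frac{\overline{F^{n\stern}}(x)}{\overline{F}(x)}
\]
and to prove $A_{n}\le c\,\alpha^{n}$ by induction on $n$, where $\alpha:=c_{F}+\varepsilon-1$. The engine is the elementary renewal-type identity, valid for every $n\ge1$ and $x\ge0$,
\[
\overline{F^{(n+1)\stern}}(x)=\overline{F}(x)+\int_{[0,x]}\overline{F^{n\stern}}(x-y)\,dF(y),
\]
obtained by writing $F^{(n+1)\stern}=F\stern F^{n\stern}$ and splitting $1=F(x)+\overline{F}(x)$. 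Note first that $\overline{F^{2\stern}}(x)\ge\mathbb{P}(\max(X_{1},X_{2})>x)=\overline{F}(x)(2-\overline{F}(x))$, so $c_{F}\ge2$ and hence $\alpha>1$; this strict inequality is what makes the geometric induction go through.

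Next I would split the range of $x$ at a fixed level $x_{0}$, chosen (using that $c_{F}$ is a $\limsup$) so that $\overline{F^{2\stern}}(x)\le(c_{F}+\varepsilon)\overline{F}(x)$ for all $x>x_{0}$, and set $D:=1/\overline{F}(x_{0})\ge1$. On $[0,x_{0}]$ one has trivially $\overline{F^{(n+1)\stern}}(x)/\overline{F}(x)\le1/\overline{F}(x)\le D$, a bound \emph{uniform in $n$}. On $(x_{0},\infty)$, inserting $\overline{F^{n\stern}}(x-y)\le A_{n}\overline{F}(x-y)$ into the identity and using $\int_{[0,x]}\overline{F}(x-y)\,dF(y)=\overline{F^{2\stern}}(x)-\overline{F}(x)$ together with the choice of $x_{0}$ gives
\[
\frac{\overline{F^{(n+1)\stern}}(x)}{\overline{F}(x)}\le1+A_{n}\Big(\frac{\overline{F^{2\stern}}(x)}{\overline{F}(x)}-1\Big)\le1+\alpha A_{n}.
\]
Combining the two regimes yields the recursion $A_{n+1}\le\max\big(D,\,1+\alpha A_{n}\big)$, which in particular (starting from $A_{1}=1$) shows inductively that every $A_{n}$ is finite.

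Finally I would close the induction with the shifted ansatz $A_{n}\le M\alpha^{n}-\tfrac{1}{\alpha-1}$, taking $M:=\alpha^{-1}\big(D+\tfrac{1}{\alpha-1}\big)$. The base case $n=1$ reads $A_{1}=1\le M\alpha-\tfrac{1}{\alpha-1}=D$, which holds since $D\ge1$. For the step, if the bound holds for $n$ then on $(x_{0},\infty)$ we get $1+\alpha\big(M\alpha^{n}-\tfrac1{\alpha-1}\big)=M\alpha^{n+1}-\tfrac1{\alpha-1}$ (the additive $+1$ is exactly absorbed by the offset because $1-\tfrac{\alpha}{\alpha-1}=-\tfrac1{\alpha-1}$), while on $[0,x_{0}]$ we have $D\le M\alpha-\tfrac1{\alpha-1}\le M\alpha^{n+1}-\tfrac1{\alpha-1}$ since $\alpha>1$; hence $A_{n+1}\le M\alpha^{n+1}-\tfrac1{\alpha-1}\le M\alpha^{n+1}$. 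Thus $A_{n}\le M\alpha^{n}$ for all $n$, i.e.\ the assertion with $c:=M$ (which depends on $\varepsilon$ only through $x_{0}$).

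The main obstacle is not the recursion itself but making the exponential base exactly $c_{F}+\varepsilon-1$ rather than the possibly much larger global quantity $\sup_{x\ge0}\big(\overline{F^{2\stern}}(x)/\overline{F}(x)-1\big)$; this is precisely what the two-regime split handles — the estimate on $[0,x_{0}]$ being a constant independent of $n$ — together with carrying the offset $-\tfrac1{\alpha-1}$ through the induction so that the additive constant produced at each step does not accumulate. It is worth noting that only $F\in\kOS$ (i.e.\ $c_{F}<\infty$) enters; no long-tailedness or membership in $\kOL$ is needed.
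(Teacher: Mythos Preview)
Your argument is correct. Note, however, that the paper does not give its own proof of this lemma: it simply cites \cite{Shimura2005}, Proposition~2.4. Your proof is precisely the classical Kesten-type induction (convolution identity, split at a threshold $x_{0}$ where the $\limsup$ is nearly attained, absorb the additive constant via a shifted geometric ansatz), which is the same strategy used in that reference and, in the subexponential case, in the original Kesten bound; so there is no genuine methodological difference to compare.
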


A proof can be found in \cite{Shimura2005}, Proposition~2.4.

%s4.1 #&#
\subsection{Proof of Proposition \texorpdfstring{\protect\ref{lem:basic-properties}}{3}}\vspace*{-8pt}
\label{ssn:basic}

\begin{pf*}{Proof of Proposition~\ref{lem:basic-properties}}
(a) We show $\kB^{(n)}=\kB_{1}^{(n)}=\kB_{2}^{(n)}=\kB
_{3}^{(n)}=\kB_{4}^{(n)}$.
The inclusions $\kB^{(n)}\supseteq\kB_{1}^{(n)}\supseteq\mathcal
{J}_{2}^{(n)}$ and $\kB_{4}^{(n)}\subseteq\mathcal{J}_{3}^{(n)}$
are obvious. The inclusion $\mathcal{J}_{1}^{(n)}\subseteq\kB_{4}^{(n)}$
follows immediately from\vspace*{-1pt}
\[
\biggl\{X_{2,n}<\frac{K}{n-1},S_{n}>x \biggr\}\subseteq
\{ X_{1,n}>S_n-K,S_{n}>x\},
\]
for all $F\in\kF$, $K>0$ and $x>0$. It remains to show $\mathcal
{J}^{(n)}\subseteq\mathcal{J}_{1}^{(n)}\subseteq\mathcal{J}_{2}^{(n)}$
and $\mathcal{J}_{1}^{(n)}\supseteq\kB_{3}^{(n)}$.

First, we prove the inclusion $\mathcal{J}^{(n)}\subseteq\mathcal
{J}_{1}^{(n)}$.
Suppose $F\in\kB^{(n)}$ and $F\notin\mathcal{J}_{1}^{(n)}$, then there
is $\tau>0$ such
that for any $m\geq1$:\vspace*{-1pt}
\[
\liminf_{x\rightarrow\infty} \wP(X_{2,n}\leq m|S_{n}>x)
\leq 1-\tau.
\]
For every $m\geq1$, we choose an unbounded and strictly increasing
sequence $(x_{k}^{m})_{k\in\mathbb{N}}$ with $\lim_{m\rightarrow
\infty}
x_{m}^{m}=\infty$
such that for all $k\in\mathbb{N}$:\vspace*{-1pt}
\[
\wP\bigl(X_{2,n}\leq m|S_{n}>x_{k}^{m}
\bigr)\leq1-\frac{\tau}{2}.
\]
Hence, we obtain\vspace*{-1pt}
\[
\limsup_{m\rightarrow\infty} \wP\bigl(X_{2,n}\leq
m|S_{n}>x_{m}^{m}\bigr)\leq1-\frac{\tau}{2},
\]
contradicting the fact that\vspace*{-1pt}
\[
\lim_{x\rightarrow\infty}\mathbb {P}\bigl(X_{2,n}>g(x)
|S_{n}>x\bigr)=0\qquad\mbox{for all }g\in\mathcal{G},
\]
so $\kB^{(n)}\subseteq\kB_{1}^{(n)}$.

Next, we show the inclusion $\mathcal{J}_{1}^{(n)}\subseteq\mathcal
{J}_{2}^{(n)}$.
Suppose $F\in\kB_{1}^{(n)}$.
By definition we know that for every $\varepsilon>0$ there are constants
$x_{0}$ and $K_{0}$ such that for all $x\geq x_{0}$ and $K\geq K_{0}$:
%
%e4.2 #&#
%
\begin{equation}
\label{eq:B_2_eq1} \wP(X_{2,n}\leq K|S_{n}>x)\geq1-\varepsilon.
\end{equation}
Let $\delta>0$. Increasing $K_{0}$ if necessary, we can assume that
$\wP(X_{2,n}\leq K)\geq1-\delta$ for all $K\geq K_{0}$. Hence, we
obtain for $x\leq x_{0}$ and $K\geq K_{0}$:
%
%e4.3 #&#
%
\begin{equation}
\label{eq:B_2_eq2} \wP(X_{2,n}\leq K|S_{n}>x) \geq
\frac{\wP(X_{2,n}\leq K)+\wP
(S_{n}>x)-1}{\wP(S_{n}>x)}\geq1-\frac{\delta}{\wP(S_{n}>x_{0})}.
\end{equation}
By (\ref{eq:B_2_eq1}) and (\ref{eq:B_2_eq2}) we see that $F\in\kB
_{2}^{(n)}$,
since $\delta>0$ and $\varepsilon>0$ are arbitrary.

Finally, we show $\mathcal{J}_{1}^{(n)}\supseteq\kB_{3}^{(n)}$. Suppose
$F\in\kB_{3}^{(n)}$ and $F\notin\kB_{1}^{(n)}$. Then there exists
some $\delta>0$ such that for any $m\geq1$:
\[
\liminf_{x \to\infty} \wP(X_{2,n}\leq m|S_{n}>x)
\leq1-2\delta.
\]
For every $m\geq1$, we choose an unbounded and strictly increasing
sequence $(x_{k}^{m})_{k\in\mathbb{N}}$
such that for all $k\in\mathbb{N}$:
\[
\wP\bigl(X_{2,n}\leq m|S_{n}>x_{k}^{m}
\bigr)\leq1-\delta.
\]
Since $F\in\kB_{3}^{(n)}$ we know there exist $c>0$ and $\bar{x}>0$
such that for all $x\geq\bar{x}$:
\[
\wP(X_{1,n}>x-c|S_{n}>x)\geq1-\frac{\delta}{3}.
\]
Hence, we obtain for any $m\geq1$ and all $k\geq1$, $x_{k}^{m}\geq
\bar{x}$:
\begin{eqnarray*}
&&\wP\bigl(X_{1,n}> x_{k}^{m}-c,
X_{2,n}>m|S_{n}>x_{k}^{m}\bigr)
\\
&&\quad= \wP\bigl(X_{1,n}>x_{k}^{m}-c|S_{n}>x_{k}^{m}
\bigr)-\wP \bigl(X_{1,n}>x_{k}^{m}-c,X_{2,n}
\leq m|S_{n}>x_{k}^{m}\bigr)
\\
&&\quad\geq\wP\bigl(X_{1,n}>x_{k}^{m}-c|S_{n}>x_{k}^{m}
\bigr)-\wP \bigl(X_{2,n}\leq m|S_{n}>x_{k}^{m}
\bigr)
\\
&&\quad\geq1-\frac{\delta}{3}-1+\delta=\frac{2}{3}\delta.
\end{eqnarray*}
We get
\[
\lim_{m\rightarrow\infty}\limsup_{x\rightarrow\infty } \wP
(X_{1,n}>x-c, X_{2,n}>m|S_{n}>x)>0,
\]
which contradicts Lemma~\ref{Hilfslemma3}.

(b) We show $\mathcal{J}^{(n+1)}=\mathcal{J}^{(n)}$. It suffices to prove
the inclusion $\mathcal{J}_{2}^{(n+1)}\supseteq\mathcal{J}_{2}^{(n)}$.
Then, we can conclude from (a) that $\mathcal{J}^{(n+1)}\supseteq
\mathcal
{J}^{(n)}$.

Suppose $F\in\kB_{2}^{(n)}$. By definition of $\kB_{2}^{(n)}$ we
know for all $\varepsilon>0$ there exists a constant $K_{0}>0$ such
that for all $x\geq0$:
\[
\wP(X_{2,n}\leq K_{0},S_{n}>x)\geq \biggl(1-
\frac{\varepsilon
}{n+1} \biggr)\overline{F^{n\ast}}(x).
\]
Hence, we obtain for $K\geq K_{0}$ and $x\geq0$:
\begin{eqnarray*}
\wP(X_{2,(1,\ldots,n)}\leq K|S_{n+1}>x) & = & \frac{\int^{\infty
}_{0}\wP(X_{2,n}\leq K,S_{n}>x-t) \,\mathrm
{d}F(t)}{\overline
{F^{(n+1)\ast}}(x)}
\\
& \geq& \frac{(1-\frace{\varepsilon}{n+1})\int^{\infty
}_{0}\overline{F^{n\ast}}(x-t)\,\mathrm{d}F(t)}{\overline
{F^{(n+1)\ast}}(x)}=1-\frac{\varepsilon}{n+1}.
\end{eqnarray*}
Thus, we see that for all $K\geq K_{0}$ and $x\geq0$:
\begin{eqnarray*}
\wP(X_{2,n+1}\leq K|S_{n+1}>x) & = & \wP (X_{2,(1,\ldots,n)}\leq
K,X_{2,(1,\ldots,n-1,n+1)}\leq K,
\\
&&\phantom{\wP (} \ldots,X_{2,(2,\ldots,n,n+1)}\leq K|S_{n+1}>x )
\\
& \geq& (n+1)\wP(X_{2,n}\leq K|S_{n+1}>x)-n
\\
& \geq&1-\varepsilon,
\end{eqnarray*}
where we used the inequality
\[
\wP \Biggl(\bigcap_{i=1}^{n+1}A_{i}
\Biggr)\geq\sum^{n+1}_{i=1}
\wP(A_{i})-n.
\]
We obtain $F\in\kB_{2}^{(n+1)}$.
\end{pf*}

In the second part of the proof of Proposition~\ref
{lem:basic-properties}(b) we will use Proposition~\ref{lem:Class_B}(a),
for that reason we give
the proof of Proposition~\ref{lem:Class_B}(a) already here.

\begin{pf*}{Proof of Proposition~\ref{lem:Class_B}(a)}
We prove $\mathcal{J}^{(n)}\subseteq\mathcal{OS}$. Let $n\geq3$.
Suppose that $F\in\mathcal{J}^{(n)}=\mathcal{J}_{1}^{(n)}$. Then,
\begin{eqnarray*}
1 & = & \lim_{K\rightarrow\infty}\liminf_{x\rightarrow\infty}\wP
\biggl(X_{2,n}<\frac{K}{n-1} \Big|S_{n}>x \biggr)
\\
& \leq& \lim_{K\rightarrow\infty}\liminf_{x\rightarrow\infty} \wP
(X_{1,n}>x|S_{n}>x+K)
\\
& \leq& \lim_{K\rightarrow\infty}\liminf_{x\rightarrow\infty
}
\frac
{n\wP(X_{n}>x)}{\wP(S_{n-1}>x)\wP(X_{n}>K)}
\\
& \leq& n\lim_{K\rightarrow\infty} \biggl(\frac
{1}{\wP(X_{n}>K)} \liminf
_{x\rightarrow\infty}\frac{\wP
(X_{n}>x)}{\wP
(S_{n-1}>x)} \biggr).
\end{eqnarray*}
Hence, we have $\liminf_{x\rightarrow\infty}\frac{\wP
(X_{1}>x)}{\wP
(S_{n-1}>x)}>0$
and thus $\limsup_{x\rightarrow\infty}\frac{\wP(S_{n-1}>x)}{\wP
(X_{1}>x)}<\infty$.

In the case $n=2$ we use the inclusion $\mathcal{J}^{(3)}\supseteq
\mathcal{J}^{(2)}$,
which was already shown above, to get $F\in\mathcal{J}^{(3)}$.
\end{pf*}

We now resume the second part of the proof of Proposition~\ref
{lem:basic-properties}.

\begin{pf*}{Proof of Proposition~\ref{lem:basic-properties}}
We begin with the inclusion $\mathcal{J}^{(n+1)}\subseteq\mathcal
{J}^{n}$.
Suppose $F\in\mathcal{J}^{(n+1)}$ and $F\notin\mathcal{J}^{(n)}$.
Then there exists $g\in\mathcal{G}$ such that
\[
\limsup_{x\rightarrow\infty} \wP\bigl(X_{2,n}>g(x)|S_{n}>x
\bigr)>0
\]
and
\[
\lim_{x\rightarrow\infty}\wP\bigl(X_{2,n+1}>g(x)|S_{n+1}>x
\bigr)=0.
\]
Thus, we have:
%
%e4.4 #&#
%
\begin{equation}
\label{eq:WIder2-2} \limsup_{x\rightarrow\infty}\frac{\wP(X_{2,n}>g(x),S_{n}>x)}{\wP
(X_{2,n+1}>g(x),S_{n+1}>x)}
\frac{\wP(S_{n+1}>x)}{\wP(S_{n}>x)}\leq \limsup_{x\rightarrow\infty}\frac{\wP(S_{n+1}>x)}{\wP
(S_{n}>x)}=\infty .
\end{equation}

By Proposition~\ref{lem:Class_B}(a), we obtain $F\in\kB
^{(n+1)}\Rightarrow F\in\kOS$.
From the identical convolution closure of $\kOS$
(see also \cite{Shimura2005},
page 452, Proposition~2.5(iv)), we see that $F\in\kOS\Rightarrow
F^{n\ast}\in\kOS\Leftrightarrow\limsup_{x\rightarrow\infty}\frac{\wP(S_{2n}>x)}{\wP(S_{n}>x)}<\infty$,
by (\ref{eq:WIder2-2}) we obtain a contradiction.
\end{pf*}

%s4.2 #&#
\subsection{Proof of Proposition \texorpdfstring{\protect\ref{lem:Class_B}}{5}}\vspace*{-9pt}
\label{ssn:classi}

\begin{pf*}{Proof of Proposition~\ref{lem:Class_B}}
(a) This was already shown above as part of the proof of Proposition~\ref
{lem:basic-properties}.

(b) and (b$'$) Let $\gamma\geq0$. If $F \in\mathcal{S}(\gamma)$,
then $F
\in\mathcal{L}(\gamma)$ and, by Lemma~\ref{Hilfslemma4}, $\F\in
\mathcal{J}$, so
$\mathcal{S}(\gamma) \subseteq\mathcal{J}\cap\mathcal{L}(\gamma)$.

Conversely, let $F \in\mathcal{J}\cap\mathcal{L}(\gamma)$. From $F
\in\mathcal{J} \subset\kOS$, it follows that $F$ satisfies
(\ref{eq:inlemma28}) and therefore Lemma~\ref{Hilfslemma4} implies that $F \in\mathcal{S}(\gamma)$.

%$\mathcal{J}\cap\mathcal{L}(\gamma)\subseteq\mathcal{S}(\gamma)%
%
%The fact that $\mathcal{J}\cap\mathcal{L}(\gamma)\subseteq\mathcal{S}(
%\gamma)$
%follows from Lemma~\ref{Hilfslemma4}. and $\mathcal{J}\cap\mathcal{L}(
%\gamma)\subset\kOS$.

(c) We show $\mathcal{D}\subseteq\kB$. Let $F\in\mathcal{D}$ and
\[
\gamma:=\sup_{x\ge0}\frac{\overline{F}(x/2)}{\overline{F}(x)}.
\]
Let $\varepsilon>0$. There exists $K_{0}>0$ such that for all $K\ge K_{0}$
\[
\wP(X_{2}>K)\gamma<\frac{\varepsilon}{2}.
\]
For $K\ge K_{0}$ and $x$ such that $x\ge2K$, we get
\begin{eqnarray*}
\wP(X_{1}\wedge X_{2}>K|S_{2}>x) & \le& 2\wP
\biggl(X_{1}>\frac
{x}{2},X_{2}>K|S_{2}>x
\biggr)
\\
& \le& 2\wP \biggl(X_{1}>\frac{x}{2} \biggr)\frac{\wP(X_{2}>K)}{\wP
(S_{2}>x)}
\\
& \leq& 2\wP(X_{2}>K)\gamma<\varepsilon.
\end{eqnarray*}
Since $\varepsilon>0$ was arbitrary, the assertion follows.
\end{pf*}

%s4.3 #&#
\subsection{Proof of Proposition \texorpdfstring{\protect\ref{TailClosure-Lemma}}{8}}
\label{ssn:tailclose}

Next we prove Proposition~\ref{TailClosure-Lemma},
which establishes tail closure property of $\kB$.

\begin{pf*}{Proof of Proposition~\ref{TailClosure-Lemma}}
Suppose $F\in\kB$, $\overline{F}\asymp\overline{G}$ and $G\notin
\kB$.
There exists $h\in\mathcal{G}$ such that
\[
\limsup_{x\rightarrow\infty}\wP\bigl(\mathrm{Y}_{2,2}>h(x)|
\hat{S}_{2}>x\bigr)>0.
\]
Thus, we have by the definition of $\kB$:
\[
\limsup_{x\rightarrow\infty}\frac{\wP(\mathrm{Y}_{2,2}>h(x),\hat
{S}_{2}>x)}{\wP(\hat{S}_{2}>x)}\frac{\wP(S_{2}>x)}{\wP
(X_{2,2}>h(x),S_{2}>x)}=\infty.
\]
By Lemmas~\ref{Hilfslemma1} and~\ref{Hilfslemma2}, we get a contradiction.
\end{pf*}

%s4.4 #&#
\subsection{Proof of Proposition \texorpdfstring{\protect\ref{Convolution-Lemma}}{10}}
\label{ssn:closeproof}

We prove the convolution closure properties of the class $\kB$.

\begin{pf*}{Proof of Proposition~\ref{Convolution-Lemma}}
(a) We prove closure under convolution powers of $\kB$,
that is, if $F\in\mathcal{J}$ then $F^{n\stern}\in\kB$. Suppose
$F^{n\stern}\in\mathcal{J}$.
We show $\overline{F^{n\stern}}\asymp\overline{F^{(n+1)\stern}}$
and hence ${F^{(n+1)\stern}}\in\kB$. From $S_{n}\in\kB\subset\kOS$
we obtain
\[
\limsup_{x\rightarrow\infty}\frac{\wP(S_{n+1}>x)}{\wP
(S_{n}>x)}\frac{\wP
(S_{2n}>x)}{\wP(S_{2n}>x)}\leq
c_{F^{n\stern}}\limsup_{x\rightarrow
\infty}\frac{\wP(S_{n+1}>x)}{\wP(S_{2n}>x)}\leq
c_{F^{n\stern}}.
\]

(b) We prove closure under convolution for tail-equivalent random variables
from the class $\kB$, that is, if $F\in\mathcal{J}$ and $\overline
{F}\asymp\overline{G}$,
then $F\stern G\in\kB$. Suppose $F\in\kB$, $\overline{F}\asymp
\overline{G}$
and $F*G\notin\kB$. Then, there exists an $h\in\mathcal{G}$ such that
%
%e4.5 #&#
%
\begin{equation}
\label{eq:Mark1-1} \limsup_{x\rightarrow\infty} \wP \bigl((X_{1}+Y_{1})
\wedge (X_{2}+Y_{2})>h(x) |\hat{S}_{2}+S_{2}>x
\bigr)>0.
\end{equation}
By $F\in\kB$ and (a) we have that $F^{2\ast}\in\kB$
and it follows by definition that
%
%e4.6 #&#
%
\begin{equation}
\label{eq:Mark2-1} \limsup_{x\rightarrow\infty} \wP \bigl((X_{1}+X_{2})
\wedge (X_{3}+X_{4})>h(x) |S_{4}>x \bigr)=0.
\end{equation}
Combining (\ref{eq:Mark1-1}) and (\ref{eq:Mark2-1}) yields
%
%e4.7 #&#
%
\begin{equation}
\label{eq:Wider3-1} \limsup_{x\rightarrow\infty}\frac{\wP((X_{1}+Y_{1})\wedge
(X_{2}+Y_{2})>h(x),\hat{S}_{2}+S_{2}>x)}{\wP((X_{1}+X_{2})\wedge
(X_{3}+X_{4})>h(x),S_{4}>x)}
\frac{\wP(S_{4}>x)}{\wP(\hat
{S}_{2}+S_{2}>x)}=\infty.
\end{equation}
By Lemma~\ref{Hilfslemma1}, we obtain $\overline{(F^{2\ast
})*(G^{2\ast})}\asymp
\overline{F^{4\ast}}$,
that is, $\limsup_{x\rightarrow\infty}\frac{\wP(S_{4}>x)}{\wP
(\hat
{S}_{2}+S_{2}>x)}<\infty$.
Hence, by Lemma~\ref{Hilfslemma2} and (\ref{eq:Wider3-1}) we get
a contradiction.

(c) We show root convolution closure {for} $\kB$,
that is, if $F^{n\stern}\in\kB$ then $F\in\kB$. Let $n=2^{m}$,
$m\in
\mathbb{N}$.
Suppose $F^{2^{m}\ast}\in\kB$.
Since $\kB\subset\kOS$ we have $F^{2^{m}\stern}\in\kOS$
and hence there exists a constant $c_{2^{m}}$ such that
\[
\liminf_{x\rightarrow\infty}\frac{\wP(S_{2^{m-1}}>x)}{\wP
(S_{2^{m}}>x)}>c_{2^{m}}>0.
\]
We obtain by definition for all $h\in\mathcal{G}$
%
%e4.8 #&#
%
\begin{eqnarray}
\label{eq:step-1} 0 & = & \limsup_{x\rightarrow\infty}\wP \bigl(S_{2^{m}}^{(1)}
\wedge S_{2^{m}}^{(2)}>h(x) |{S_{2^{m}}^{(1)}+S_{2^{m}}^{(2)}}>x
\bigr)
\nonumber
\\[-8pt]
\\[-8pt]
& \geq& c_{2^{m}}\limsup_{x\rightarrow\infty}\wP
\bigl(S_{2^{m-1}}^{(1)}\wedge S_{2^{m-1}}^{(2)}>h(x)
|{S_{2^{m-1}}^{(1)}+S_{2^{m-1}}^{(2)}}>x \bigr).
\nonumber
\end{eqnarray}
Thus, we have $F^{2^{m-1}\stern}\in\kB$. We repeat
the argument leading to (\ref{eq:step-1}) for
$(m-1)$ times and arrive at
%
%e4.9 #&#
%
\begin{eqnarray}
\label{eq:Rootmark-1} 0 & \ge& {c_{2^{m}}\limsup_{x\rightarrow\infty}\wP
\bigl(S_{2^{m-1}}^{(1)}\wedge S_{2^{m-1}}^{(2)}>h(x)
|S_{2^{m-1}}^{(1)}+S_{2^{m-1}}^{(2)}>x \bigr)}
\nonumber
\\
& \ge& {c_{2^{m}}\cdot c_{2^{m-1}}\limsup_{x\rightarrow\infty}
\wP \bigl(S_{2^{m-2}}^{(1)}\wedge S_{2^{m-2}}^{(2)}>h(x)
|S_{2^{m-2}}^{(1)}+S_{2^{m-2}}^{(2)}>x \bigr)}
\nonumber
\\[-8pt]
\\[-8pt]
& \vdots&
\nonumber
\\
& \geq& {c_{2^{m}}\cdots c_{2}}\limsup_{x\rightarrow\infty}
\wP \bigl(X_{1}\wedge X_{2}>h(x)|S_{2}>x \bigr),
\nonumber
\end{eqnarray}
which gives $F\in\kB$. In case $n\neq2^{m}$ for
all $m\in\mathbb{N}$, we take $\widetilde{m}:=\min\{m\in\mathbb
{N}:n<2^{m}\}$.
Denote by $k:=2^{\widetilde{m}}$. By the argument
in the proof of (a),
%Proposition~\ref{Convolution-Lemma} a)
we know that $F^{n}\in\kB\Rightarrow F^{k}\in\kB$.
From (\ref{eq:Rootmark-1}), we obtain $F\in\kB$.
\end{pf*}

%s4.5 #&#
\subsection{Proof of Proposition \texorpdfstring{\protect\ref{lem:mix}}{12}}\vspace*{-9pt}
\label{ssn:proofmixture}

\begin{pf*}{Proof of Proposition~\ref{lem:mix}}
%{ a) The weak asymptotic tail-equivalence $\overline{F_{X\vee Y}}\asymp
%\overline{(pF+(1-p)G)}$
%follows from the tail-equivalences $\overline{F_{X\vee Y}}\sim
%\overline{F}+\overline{G}$
%and $\overline{F}+\overline{G}\asymp\overline{(pF+(1-p)G)}$.}
%
(a) The equivalence (i)$\,\Leftrightarrow\,$(iii) follows from
(\ref{eq:mixequiv}) and Lemma~\ref{TailClosure-Lemma}. Next, we show the
equivalence (i)$\,\Leftrightarrow\,$(ii). Let $(X+Y)\in\mathcal{J}$
(with our usual slight abuse of notation). To show
that $(X\vee Y)\in\mathcal{J}$, abbreviate $V_{i}:=X_{i}\vee Y_{i}$
for $i\in\{1,2,3,4\}$. Then, for every $g\in\mathcal{G}$ and $x\ge0$,
%
%e4.10 #&#
%
\begin{eqnarray}
\label{eq:Mark0} \frac{\wP(V_{1}\wedge V_{2}>g(x),{V_{1}+V_{2}}>x)}{\wP(V_{1}+V_{2}>x)} & \leq& \frac{\wP(X_{1}\wedge X_{2}>g(x),X_{1}+X_{2}>x)}{\wP
(V_{1}+V_{2}>x)}
\nonumber
\\
& &{} +\frac{\wP(Y_{1}\wedge Y_{2}>g(x),Y_{1}+Y_{2}>x)}{\wP
(V_{1}+V_{2}>x)}
\\
& &{} +2\frac{\wP(X_{1}\wedge Y_{2}>g(x),X_{1}+Y_{2}>x)}{\wP
(V_{1}+V_{2}>x)}.
\nonumber
\end{eqnarray}
From $X\in\kB$, we obtain for the first term on the right-hand side
of (\ref{eq:Mark0}):
\[
\limsup_{x\rightarrow\infty}\frac{\wP(X_{1}\wedge
X_{2}>g(x),S_{2}>x)}{\wP(V_{1}+V_{2}>x)} \leq\limsup
_{x\rightarrow
\infty}\frac{\wP(X_{1}\wedge X_{2}>g(x),S_{2}>x)}{\wP(S_{2}>x)}=0.
\]
Analogously for the second term:
\[
\limsup_{x\rightarrow\infty}\frac{\wP(Y_{1}\wedge
Y_{2}>g(x),Y_{1}+Y_{2}>x)}{\wP(V_{1}+V_{2}>x)}=0.
\]
From $(X+Y)\in\mathcal{J}$, we obtain for the third term on the right-hand
side of (\ref{eq:Mark0}):
\begin{eqnarray*}
&&\limsup_{x\rightarrow\infty} 2 \frac{\wP(X_{1}\wedge
Y_{2}>g(x),X_{1}+Y_{2}>x)}{\wP(V_{1}+V_{2}>x)}
\\
&&\quad\le2\limsup_{x\rightarrow\infty} \biggl(\frac{\wP
(X_{1}\wedge
Y_{2}>g(x),S_{2}+\hat{S}_{2}>x)}{\wP(S_{2}+\hat{S}_{2}>x)}
\frac{\wP
(S_{2}+\hat{S}_{2}>x)}{\wP(X_{1}+Y_{1}>x)} \biggr)=0.
\end{eqnarray*}
Altogether, we arrive at
\begin{eqnarray*}
&&\limsup_{x\rightarrow\infty} \wP \bigl((X_{1}\vee
Y_{1})\wedge (X_{2}\vee Y_{2})>g(x) |
(X_{1}\vee Y_{1})+(X_{2}\vee Y_{2})>x
\bigr)
\\
&&\quad=\limsup_{x\rightarrow\infty}\frac{\wP(V_{1}\wedge
V_{2}>g(x),V_{1}+V_{2}>x)}{\wP(V_{1}+V_{2}>x)}=0
\end{eqnarray*}
for all $g\in\mathcal{G}$, that is, by (\ref{defnBwithG}),
$(X\vee Y)\in\mathcal{J}$.

For the opposite implication $(X\vee Y)\in\mathcal{J}\Rightarrow
(X+Y)\in\mathcal{J}$
abbreviate $W_{i}:=X_{i}+Y_{i}$ for $i\in\{1,2\}$.
%denote by $W_{n}:=X_{n}+Y_{n}$.
From $(X\vee Y)\in\kB$ and ${V_{1}+V_{2}}\in\kB\subset\kOS$, we obtain
for all $g\in\mathcal{G}$
\begin{eqnarray*}
&&\limsup_{x\rightarrow\infty} \frac{\wP(W_{1}\wedge
W_{2}>g(x),{W_{1}+W_{2}}>x)}{\wP({W_{1}+W_{2}}>x)}
\\
&&\quad\leq\limsup_{x\rightarrow\infty} \biggl(\frac{\wP
(({V_{1}+V_{2}})\wedge
({V_{3}+V_{4}})>g(x),{V_{1}+\cdots+V_{4}}>x)}{\wP({V_{1}+\cdots
+V_{4}}>x)}
\frac{\wP({V_{1}+\cdots+V_{4}}>x)}{\wP
({V_{1}+V_{2}}>x)} \biggr)
\\
&&\quad=0,
\end{eqnarray*}
hence by (\ref{defnBwithG}) $(X+Y)\in\mathcal{J}$,
which completes the proof of the equivalence (i)$\,\Leftrightarrow
\,$(ii).

Next, we prove Proposition~\ref{lem:mix}(b). The proof is analogous
to the proof of the same assertion for the class $\kOS$, see
\cite{Lin}, Lemma~3.1.
Let $L_{i}:=X_{i}\wedge Y_{i}$ for $i \in\{1,2\}$. For all $g\in
\mathcal{G}$ we have
\begin{eqnarray*}
&& \frac{\wP(L_{1}\wedge L_{2}>g(x),L_{1}+L_{2}>x)}{\wP
(L_{1}+L_{2}>x)}
\\
&&\quad\leq\frac{\int_{g(x)}^{\infty}\wP
(X_{1}>(x-y)\vee g(x))\wP(Y_{1}>(x-y)\vee g(x))\,\mathrm
{d}F_{L_{2}}(y)}{\wP
(L_{1}>x)}
\\
&&\quad\leq\frac{\int_{g(x)}^{\infty}\wP
(X_{1}>(x-y)\vee g(x))\wP(Y_{1}>(x-y)\vee g(x))\wP(Y_{2}\ge
y)\,\mathrm{d}F_{X_{2}}(y)}{\wP(X_{1}>x)\wP(Y_{1}>x)}
\\
&&\qquad{} +\frac{\int_{g(x)}^{\infty}\wP
(X_{1}>(x-y)\vee g(x))\wP(Y_{1}>(x-y)\vee g(x))\wP(X_{2} \ge
y)\,\mathrm{d}F_{Y_{2}}(y)}{\wP(X_{1}>x)\wP(Y_{1}>x)}.
\end{eqnarray*}
Using the {inequality}
\[
\wP\bigl(Y_{1}>(x-y)\vee g(x)\bigr)\wP(Y_{2}\ge y)\leq
\wP(Y_{1}+Y_{2}>x) %
\]
we obtain
\begin{eqnarray*}
&& \limsup_{x\rightarrow\infty} \frac{\wP(L_{1}\wedge
L_{2}>g(x),L_{1}+L_{2}>x)}{\wP(L_{1}+L_{2}>x)}
\\
&&\quad\leq\limsup_{x\rightarrow\infty}\frac{\wP
(Y_{1}+Y_{2}>x)}{\wP
(Y_{1}>x)}\frac{\int_{g(x)}^{\infty}\wP
(X_{1}>(x-y)\vee g(x))\,\mathrm{d}F_{X_{2}}(y)}{\wP(X_{1}>x)}
\\
&&\qquad{} +\limsup_{x\rightarrow\infty}\frac{\wP
(X_{1}+X_{2}>x)}{\wP
(X_{1}>x)}\frac{\int_{g(x)}^{\infty}\wP
(Y_{1}>(x-y)\vee g(x))\,\mathrm{d}F_{Y_{2}}(y)}{\wP(Y_{1}>x)}
\\
&&\quad= \limsup_{x\rightarrow\infty}\frac{\wP(\hat
{S}_{2}>x)}{\wP
(Y_{1}>x)}\frac{\wP(S_{2}>x,X_{1}\wedge X_{2}>g(x))}{\wP(X_{1}>x)}
\\
&& \qquad{} +\limsup_{x\rightarrow\infty}\frac{\wP(S_{2}>x)}{\wP
(X_{1}>x)}\frac
{\wP(\hat{S}_{2}>x,Y_{1}\wedge Y_{2}>g(x))}{\wP(Y_{1}>x)}
\\
&&\quad= 0,
\end{eqnarray*}
since $F,G \in\kB\subset\kOS$. The proof is complete.
\end{pf*}

%s4.6 #&#
\subsection{Proofs of Propositions \texorpdfstring{\protect\ref{lem:Zufaellige-Sum-B}}{14}
and \texorpdfstring{\protect\ref{lem:Zufaellige-Sum-B-2}}{16}}\label{ssn:proofrandsum}

We begin with
%prove first proposition from the section ``random sums'',
Proposition~\ref{lem:Zufaellige-Sum-B}.

\begin{pf*}{Proofs of Proposition~\ref{lem:Zufaellige-Sum-B}}
(a) Suppose $F\in\mathcal{J}$ and $\sum^{\infty
}_{k=1}p_{k}({c_{F}+\varepsilon-1})^{k}<\infty$ for some $\varepsilon>0$.
Recall that we need to show that $\overline{F}_{N}\asymp\overline{F}$.
% and $F_{N}\in\mathcal{B}$.
From Lemma~\ref{la:kesten} (Kesten's) and $F\in\kB\subset\mathcal{OS}$
we obtain for some suitable $c_{1}\in(0,\infty)$ and all $x\geq0$,
\begin{eqnarray*}
\overline{F}_{N}(x)&=&{\sum_{k=1}^{\infty}}p_{k}
\overline{F^{k\ast
}}(x)\leq{\sum_{k=1}^{\infty}}
{p_{k}c_{1}(c_{F}+\varepsilon
-1)^{k}}\overline{F}(x)
\\
&=&\overline{F}(x){\sum
_{k=1}^{\infty
}}p_{k}{c_{1}(c_{F}+
\varepsilon-1)^{k}}.
\end{eqnarray*}
Hence, we see that $\limsup_{x\rightarrow\infty}\overline
{F_{N}}(x)/\overline{F}(x)<\infty$.
For the lower bound, pick some $k \ge1$ with $p_{k}>0$. Then, for
all $x\geq0$,
\[
\overline{F}_{N}(x)\geq p_{k}\wP(S_{k}>x)\geq
p_{k}\overline{F}(x).
\]

We obtain $\overline{F}_{N}\asymp\overline{F}$ and therefore
$F_{N}\in
\mathcal{J}$.

(b) Suppose $F_{N}\in\mathcal{J}$ and that $\sum^{\infty}_{k=1}p_{k}
({c_{F_{N}}+\varepsilon-1})^{k}<\infty$ for some $\varepsilon>0$.
%(c_{F_{N}})^{k}<\infty$.
Again, we need to prove that $\overline{F_{N}}\asymp\overline{F}$.
To this end, by means of contradiction, suppose that for every integer
$n\geq2$,
\[
\liminf_{x\rightarrow\infty} \frac{\overline{F^{n\ast}}(x)}{\overline{F_{N}}(x)}=0.
\]
Our proof then splits into two cases:

\textit{Case 1}: $p_{0}=0$.
For every $n\geq1$, we choose an unbounded and strictly increasing
sequence $(x_{k}^{n})_{k\in\mathbb{N}}$ such that for all $n\in
\mathbb{N}$
\[
\lim_{m\rightarrow\infty}\frac{\overline{F^{n\ast
}}(x_{m}^{n})}{\overline{F_{N}}(x_{m}^{n})}=0 \quad\mbox{and in particular}
\quad \lim_{m\rightarrow\infty}\frac{\overline{F^{m\ast
}}(x_{m}^{m})}{\overline{F_{N}}(x_{m}^{m})}=0.
\]
From Lemma~\ref{la:kesten} (Kesten's) and $p_0=0$ we conclude that,
for some suitable $c_2 \in(0, \infty)$, for all $n,m\in\wN$
\[
\frac{\overline{F^{n\ast}}(x_{m}^{n})}{\overline
{F_{N}}(x_{m}^{n})}\leq\frac{\overline{F_{N}^{n\stern
}}(x_{m}^{n})}{\overline{F_{N}}(x_{m}^{n})}\leq{c_2}
({c_{F_N}+\varepsilon-1})^{n}. %(c_{F_{N}})^{n}.
\]
Since by assumption the right-hand side is summable in $n$,
%So thanks to $\underset{k=1}{\overset{\infty}{
%\Sigma}}p_{k}(c_{F_{N}})^{k}<\infty$
we can use the dominated convergence theorem to arrive at the desired
contradiction:
\begin{eqnarray*}
1&=&\lim_{m\rightarrow\infty}\frac{\overline
{F_{N}}(x_{m}^{m})}{\overline
{F_{N}}(x_{m}^{m})} =\lim_{m\rightarrow\infty}
\sum_{k=1}^{\infty}p_{k}
\frac{\overline
{F^{k\ast}}(x_{m}^{m})}{\overline{F_{N}}(x_{m}^{m})}
\\
&=& \sum_{k=1}^{\infty}p_{k}\lim
_{m\rightarrow\infty}\frac
{\overline{F^{k\ast}}(x_{m}^{m})}{\overline{F_{N}}(x_{m}^{m})}\leq \sum
_{k=1}^{\infty}p_{k}\lim_{m\rightarrow\infty
}
\frac{\overline{F^{m\ast}}(x_{m}^{m})}{\overline{F_{N}}(x_{m}^{m})}=0.
\end{eqnarray*}

\textit{Case 2}: $p_{0}>0$. This can be reduced to {Case 1} by
switching to the reweighted random variable
$\hat{N}$ with probabilities
\[
\hat{p}_{n}:=\wP(\hat{N}=n):=\frac{p_{n}}{1-p_{0}},
\]
for $n>0$ and $\hat{p}_{0}=\wP(\hat{N}=0):=0$. Thanks to {Case
1} we have that $F_{\hat{N}}\in\kB$. Further, observe that
\[
\lim_{x\rightarrow\infty}\frac{\overline{F_{N}}(x)}{\overline
{F_{\hat
{N}}}(x)}=\lim_{x\rightarrow\infty}
\frac{\sum^{\infty
}_{n=0}p_{n}\overline{F^{n\ast}}(x)}{\sum^{\infty}_{n=0}\hat
{p}_{n}\overline{F^{n\ast}}(x)}=\frac{1-p_{0}}{1}.
\]
From Proposition~\ref{TailClosure-Lemma}
%, Case 1
and $\overline{F_{N}}\asymp\overline{F_{\hat{N}}}$,
we conclude that $F_{N}\in\kB$.
\end{pf*}

Next, we prove Proposition~\ref{lem:Zufaellige-Sum-B-2} using the
arguments of
the proof of Theorem~1.5 of Watanabe \cite{Watanabe}.

\begin{pf*}{Proof of Proposition~\ref{lem:Zufaellige-Sum-B-2}}
(a) To infer that $\overline{F_{N}}\asymp\overline{F^{m\ast}}$ for
some $m \in\N$, we again argue by contradiction. So suppose that for
every integer $m\geq2$
\[
\liminf_{x\rightarrow\infty}\frac{\overline{F^{m\ast
}}(x)}{\overline{F_{N}}(x)}=0.
\]
From $F_{N}\in\mathcal{OS}$, we know that $c_{F_{N}}<\infty$ and from
our assumption $\liminf_{n\rightarrow\infty}\frac{\wP
(N_{1}+N_{2}>n)}{\wP(N_{1}>n)}>c_{F_{N}}$
we infer that there exists a $\delta>0$ and an integer $m_{0}{
=m_0(\delta)}$ such that,
for every $k\geq m_{0}+1$,
%
%e4.11 #&#
%
\begin{equation}
\label{eq:Bed-p-x-p} \frac{\wP(N_{1}+N_{2}>k)}{\wP(N_{1}>k)}>c_{F_{N}}+\delta.
\end{equation}
Let $(x_{n})_{n\in\wN}$ be a strictly increasing sequence with
$\lim_{n\rightarrow\infty}x_{n}=\infty$
such that
%
%e4.12 #&#
%
\begin{equation}
\label{eq:bed_zu_null} \lim_{n\rightarrow\infty}\frac{\overline{F^{m_{0}\ast
}}(x_{n})}{\overline{F_{N}}(x_{n})}=0.
\end{equation}
Since $\overline{F^{m_{0}\ast}}(x)\geq\overline{F^{k\ast}}(x)$
for $1\leq k\leq m_{0}$, we have
\[
\lim_{n\rightarrow\infty}\frac{\overline{F^{k\ast
}}(x_{n})}{\overline{F_{N}}(x_{n})}=0
\]
for $1\leq k\leq m_{0}$. As in \cite{Watanabe}, define $I_{j}(n)$
and $J_{j}(n)$ for $j=1,2$ as
\begin{eqnarray*}
J_{1}(n)&=&\sum_{k=0}^{m_{0}}(p*p)_{k}
\overline {F^{k\ast}}(x_{n}), \qquad I_{1}(n)=\sum
_{k=0}^{m_{0}}p_{k}{
\overline{F^{k\ast}}(x_{n}),}
\\
J_{2}(n)&=&\sum_{k=m_{0}+1}^{\infty}(p*p)_{k}
\overline{F^{k\ast
}}(x_{n}), \qquad I_{2}(n)=\sum
_{k=m_{0}+1}^{\infty}p_{k}\overline
{F^{k\ast}}(x_{n}).
\end{eqnarray*}
We see from (\ref{eq:bed_zu_null}) that
%
%e4.13 #&#
%
\begin{equation}
\label{eq:I1-J1} \lim_{n\rightarrow\infty}\frac{I_{1}(n)}{\overline
{F_{N}}(x_{n})} = \lim
_{n\rightarrow\infty}\frac
{J_{1}(n)}{\overline{F_{N}}(x_{n})}=0,
\end{equation}
and since $F_{N}^{2\ast}= \sum_{k=0}^{\infty}(p*p)_{k}F^{k\ast}$,
(\ref{eq:Bed-p-x-p}) and (\ref{eq:I1-J1}) give
\[
c_{F_{N}}\geq\limsup_{n\rightarrow\infty}\frac
{\overline{F_{N}^{2\ast}}(x_{n})}{\overline{F_{N}}(x_{n})}=\limsup
_{n\rightarrow\infty}\frac
{(J_{1}(n)+J_{2}(n))/\overline
{F_{N}}(x_{n})}{(I_{1}(n)+I_{2}(n))/\overline{F_{N}}(x_{n})}=\limsup_{n\rightarrow\infty}
\frac{J_{2}(n)}{I_{2}(n)}. %>c_{F_{N}}+\delta.
\]
To arrive at the desired {contradiction}, define
$h_{m_{{0}}+1}(x_{n}):=\overline{F^{(m_{0}+1)\ast}}(x_{n})$
and $h_{j}(x_{n}):=\overline{F^{j\ast}}(x_{n})-\overline
{F^{(j-1)\ast}}(x_{n})$
for $j>m_{0}+1$. We obtain
\begin{eqnarray*}
\limsup_{n\rightarrow\infty}\frac
{J_{2}(n)}{I_{2}(n)} & = & \limsup
_{n\rightarrow\infty}\frac
{\sum_{k=m_{0}+1}^{\infty}(p*p)_{k}\sum^{k}_{j=m_{0}+1}h_{j}(x_{n})}{\sum_{k=m_{0}+1}^{\infty
}p_{k}\sum^{k}_{j=m_{0}+1}h_{j}(x_{n})}
\\
& = & \limsup_{n\rightarrow\infty}\frac{\sum_{j=m_{0}+1}^{\infty
}h_{j}(x_{n})\sum^{\infty}_{{k=j}}\wP(N_{1}+N_{2}=k)}{\sum_{j=m_{0}+1}^{\infty
}h_{j}(x_{n})\sum^{\infty}_{{k=j}}\wP(N_{1}=k)}
\\
& = & \limsup_{n\rightarrow\infty}\frac{\sum_{j=m_{0}+1}^{\infty
}h_{j}(x_{n}){\wP
(N_{1}+N_{2}>j-1)}}{\sum_{j=m_{0}+1}^{\infty}h_{j}(x_{n}){\wP(N_{1}>j-1)}}
\\
& > & c_{F_{N}}+\delta.
\end{eqnarray*}
This is a contradiction. Since $\overline{F^{m\ast}}(x)\leq\overline
{F_{N}}(x)\frac{1}{p_{m}}$
with $p_{m}>0$ for sufficiently large integers $m$, it follows that
$\overline{F_{N}}\asymp\overline{F^{m\ast}}$.

(b) The assertion follows from (a), Proposition~\ref{Convolution-Lemma}
and $\kB\subset\kOS$.
\end{pf*}

%s4.7 #&#
\subsection{Proof of Theorem \texorpdfstring{\protect\ref{thm:P-V-E-for-B}}{19}}% P-V-E
%for
%the class $\kB$.}

We prepare the proof by recalling two results due to Yang and Wang
\cite{Yang2011} for reference.
%, namely \cite{Yang2011}, Lemma~2.2 and
%\cite{Yang2011}, Theorem~1.4.}
%We will need the following lemmas.

%
%le30 #&#
%
\begin{lem}[({\cite{Yang2011}, Lemma~2.2})]\label{lem:YangLemma1}
With the notation of Theorem~\ref{thm:P-V-E},
if $a <0$ and $F_{I}\in\kOL$ then $\overline{G}\asymp\overline{F_{I}}$.
\end{lem}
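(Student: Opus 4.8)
The plan is to reduce the statement to the classical ladder–height representation of the ascending ladder tail, and then to compare the renewal measure that appears there with Lebesgue measure. Since $a<0$ the random walk $(\bar S_{n})_{n\ge 0}$ drifts to $-\infty$, so its weak descending ladder–height distribution $G_{-}$ (the law of $-\bar S_{\tau_-}$, where $\tau_-:=\inf\{n\ge 1:\bar S_{n}\le 0\}$) is a proper probability measure on $[0,\infty)$ with finite mean $m\in(0,\infty)$; indeed $\wE[\tau_-]=1/(1-p)<\infty$ and, by Wald, $m=|a|\,\wE[\tau_-]=|a|/(1-p)$. Writing $U:=\sum_{k\ge 0}G_{-}^{k*}$ for the associated renewal measure on $[0,\infty)$, one has the classical identity (see e.g.\ \cite{Feller71}, Chapter XII, and Asmussen's monographs)
\[
p\,\overline{G}(x)=\wP(\bar S_{\tau_{+}}>x,\ \tau_{+}<\infty)=\int_{[0,\infty)}\overline{F_{\bar X}}(x+z)\,U(dz),\qquad x\ge 0.
\]
Since $\overline{F_{I}}(x)=\int_{x}^{\infty}\overline{F_{\bar X}}(y)\,dy=\int_{0}^{\infty}\overline{F_{\bar X}}(x+z)\,dz$ for all large $x$, the assertion $\overline{G}\asymp\overline{F_{I}}$ becomes the statement that on nonincreasing integrands $U$ is comparable, up to multiplicative constants, to Lebesgue measure.

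To make this precise I would use two-sided linear bounds on the renewal function: there exist $0<c_{1}\le c_{2}<\infty$ with $c_{1}(1+t)\le U([0,t])\le c_{2}(1+t)$ for all $t\ge 0$. The lower bound comes from the elementary renewal theorem $U([0,t])/t\to 1/m$ together with $U(\{0\})\ge 1$; the upper bound from the same asymptotics together with the standard estimate $\sup_{t\ge 0}U([t,t+1])<\infty$ (valid because $G_{-}$ is non-degenerate with finite mean). These transfer to integrals through the layer-cake formula $\int_{[0,\infty)}h\,dU=\int_{0}^{\infty}U\big(\{z:h(z)>s\}\big)\,ds$: since for nonincreasing $h$ the set $\{z:h(z)>s\}$ is an interval with left endpoint $0$ and length $h^{-1}(s):=\sup\{z:h(z)>s\}$, and $\int_{0}^{\infty}h^{-1}(s)\,ds=\int_{0}^{\infty}h(z)\,dz$ while $|\{s:h^{-1}(s)>0\}|\le h(0)$, one gets for every nonincreasing $h:[0,\infty)\to[0,\infty)$
\[
c_{1}\int_{0}^{\infty}h(z)\,dz\ \le\ \int_{[0,\infty)}h(z)\,U(dz)\ \le\ c_{2}\Big(h(0)+\int_{0}^{\infty}h(z)\,dz\Big).
\]
Applying this with $h(z)=\overline{F_{\bar X}}(x+z)$, so that $h(0)=\overline{F_{\bar X}}(x)$ and $\int_{0}^{\infty}h(z)\,dz=\overline{F_{I}}(x)$, yields for all large $x$
\[
c_{1}\,\overline{F_{I}}(x)\ \le\ p\,\overline{G}(x)\ \le\ c_{2}\big(\overline{F_{\bar X}}(x)+\overline{F_{I}}(x)\big).
\]

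The left inequality already gives $\liminf_{x\to\infty}\overline{G}(x)/\overline{F_{I}}(x)\ge c_{1}/p>0$, and uses no tail hypothesis whatsoever. For the right inequality I would note that, $\overline{F_{\bar X}}$ being nonincreasing, $\overline{F_{\bar X}}(x)\le\int_{x-1}^{x}\overline{F_{\bar X}}(y)\,dy\le\overline{F_{I}}(x-1)$; here the hypothesis $F_{I}\in\kOL$ enters and gives $\limsup_{x\to\infty}\overline{F_{I}}(x-1)/\overline{F_{I}}(x)<\infty$, hence $\overline{F_{\bar X}}(x)\le C\,\overline{F_{I}}(x)$ for large $x$ and therefore $\limsup_{x\to\infty}\overline{G}(x)/\overline{F_{I}}(x)<\infty$. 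Combining the two estimates gives $\overline{G}\asymp\overline{F_{I}}$.

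I expect the main difficulty to be bookkeeping rather than analysis: making the ladder-height representation and the renewal estimates precise in the present real-valued-increment setting — in particular the finiteness of $m$ when $a<0$ and $p<1$, and the uniform bound $\sup_{t}U([t,t+1])<\infty$ — which are classical but deserve careful citation. It is worth remarking that the hypothesis $F_{I}\in\kOL$ is essentially unavoidable for the upper bound: the atom of $U$ at $0$ already forces $p\,\overline{G}(x)\ge\overline{F_{\bar X}}(x)$, so if $\overline{F_{\bar X}}/\overline{F_{I}}$ were unbounded along a subsequence — exactly what $F_{I}\notin\kOL$ permits — then $\overline{G}\asymp\overline{F_{I}}$ could not hold.
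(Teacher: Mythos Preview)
The paper does not supply its own proof of this lemma; it is simply quoted from Yang and Wang \cite[Lemma 2.2]{Yang2011} as an auxiliary input for the proof of Theorem~\ref{thm:P-V-E-for-B}. There is therefore nothing in the paper to compare your argument against.

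That said, your proof is correct and is the standard route to this kind of ladder-height tail asymptotic. The representation $p\,\overline G(x)=\int_{[0,\infty)}\overline{F_{\bar X}}(x+z)\,U(dz)$, with $U$ the renewal measure of the weak descending ladder heights, is classical (Feller~XII, Asmussen); the finiteness of the descending-ladder mean $m=|a|/(1-p)$ follows from $a<0$, $p<1$, $\wE|\bar X_1|<\infty$ and Wald, exactly as you say; and the two-sided linear bound on $U([0,t])$ then reduces the problem to controlling $\overline{F_{\bar X}}(x)$ by a multiple of $\overline{F_I}(x)$. Your use of the hypothesis $F_I\in\kOL$ is placed precisely where it is needed, via $\overline{F_{\bar X}}(x)\le\overline{F_I}(x-1)\le C\,\overline{F_I}(x)$, and your closing remark that the atom $U(\{0\})\ge 1$ forces $p\,\overline G(x)\ge\overline{F_{\bar X}}(x)$ nicely explains why the $\kOL$ assumption is essentially necessary for the upper bound. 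The only places that merit a careful citation rather than further argument are the ladder-height identity itself and the uniform bound on $U$ over unit intervals, both of which are textbook results.
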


%{ A} proof can be found in \cite{Yang2011}, Lemma~2.2.

%
%th31 #&#
%
\begin{them}[({\cite{Yang2011}, Theorem~1.4})]\label{lem:YangLemma2}
Let $F$
be a distribution function on $(-\infty,\infty)$ such that
%$$
%\overset{\infty}{\underset{0}{\int}}\overline{F}(y)dy<\infty
%{ \mbox{ and } F_{I}\in\kOS\cap\mathcal{K}}.
%$$
$\overline{F}$ is integrable and $F_{I}\in\kOS\cap\mathcal{DK}$.
Further, let $\alpha$ and $\beta$ be two fixed positive constants.
Consider any sequence $\{X_{i}:i\geq1\}$ of independent random variables
such that, for each $i\ge1$, the distribution $F_{i}$
of $X_{i}$ satisfies the conditions
\[
\overline{F_{i}}(x)\leq\overline{F}(x),\qquad\mbox{for all } x\in (-
\infty,\infty)\quad\mbox{and} \quad\intop _{-\infty}^{\infty
}(y\vee-
\beta)\,\mathrm{d}F_{i}(y)\leq-\alpha.
\]
Then there exists a positive constant $r$, depending only on $F$,
$\alpha$ and $\beta$, such that for all sequences $\{X_{i}:i\geq1\}$
as above,
%{{\tt Wirklich uniform fuer jede sequence?}}
%
\[
\overline{F_{M}}(x)\leq r\overline{F_{I}}(x)
\]
for all $x\in(-\infty,\infty)$.
\end{them}

Now the proof of the P--V--E theorem for class $\kB$ can
simply be reduced to
previously stated results.

\begin{pf*}{Proof of Theorem~\ref{thm:P-V-E-for-B}}
Since by assumption $F_{I}\in\kOL$ and $a:=\mathbb{E}[\bar{X_{k}}]<0$
we obtain from Lemma~\ref{lem:YangLemma1} that $\overline
{F_{I}}\asymp
\overline{G}$.
Hence, the equivalence of (1) and (2) follows from the weak tail-equivalence
closure of the class $\kB$ (Proposition~\ref{TailClosure-Lemma}).

Now additionally assume $p(c_{G}+\varepsilon-1)<1$ holds for some
$\varepsilon>0$ (condition (i)). As we know from (\ref{eq:W_G_relation}),
we can write $F_{M}$ as a random sum
%
%e4.14 #&#
%
\begin{equation}
\label{eq:RandomSumFM} \overline{F_{M}}=(1-p)\sum
_{n=0}^{\infty}p^{n}\overline
{G^{n*}}(x).
\end{equation}
Hence, we obtain $\overline{F_{M}}\asymp\overline{G}$ by application
of Proposition~\ref{lem:Zufaellige-Sum-B}. Now, applying the weak
tail-equivalence closure of the class $\kB$ we conclude the equivalence
of (2) and (3).

Next, assume additionally that $F_{M}\in\kOS$ (condition (ii)) holds.
Again, by using the expression (\ref{eq:RandomSumFM}) and Proposition~\ref{lem:Zufaellige-Sum-B-2}(b) we obtain $\overline{F_{M}}\asymp
\overline{G}$
and hence the equivalence of (2) and (3).

Finally, under condition $F_{I}\in\kB\cap\mathcal{DK}$ we can use
Theorem~\ref{lem:YangLemma2}. By choosing $F_{i}=F_{\bar{X}}$ it is easy
to see that we can find appropriate constants $\alpha,\beta$ such
that $\intop_{-\infty}^{\infty}(y\vee-\beta)\,\mathrm{d}F_{\bar
{X}}(y)\leq-\alpha$
holds. Hence, there exists a constant $r$ such that $\overline
{F_{M}}(x)\leq r\overline{F_{I}}(x)$
for all $x\in(-\infty,\infty)$. By (a) of Theorem~\ref{thm:P-V-E-for-OS}
we obtain $\overline{F_{M}}\asymp\overline{F_{I}}$ and hence
$F_{M},G\in
\kB$.
Now, $\overline{F_{M}}\asymp\overline{G}$ follows from $\overline
{F_{I}}\asymp\overline{G}$
and $\overline{F_{M}}\asymp\overline{F_{I}}$.
\end{pf*}

%\begin{appendix}
%\section{}
%\end{appendix}

% zodis "Acknowledgments" paliekamas pagal autoriu
\section*{Acknowledgements}

The authors wish to thank Sergey Foss for interesting discussions
and various suggestions for improvements of the paper, including the
suggestion
to denote the new class by $\mathcal{J}$ (for \emph{jump}). They also
thank Yuebao
Wang for detecting an error in an example in an earlier version of the
manuscript (which
lead to the new paper \cite{Xu2014}).
S. Beck was supported by `Verein zur F\"orderung der
Versicherungswissenschaften an den Berliner Universit\"aten e.V.'.

%\begin{supplement}%[id=suppA]
%\sname{Supplement A}
%\stitle{}
%\slink[doi]{10.3150/00-BEJXXXXSUPP} %[doi,text={...}] - jei reikia
%suskaldyti doi
%\sdatatype{.pdf}
%\sfilename{BEJ000\_supp.pdf}
%\sdescription{}
%\end{supplement}

%\bibitem[\protect\citeauthoryear{}{()}]{r1}
%\bibitem{r1}
% imsref loaded by audrone.aklyte, 2014-07-29 10:32:22
%

\printhistory
\end{document}